\documentclass{article}
\usepackage{amsmath}
\usepackage{amssymb}
\usepackage{amsthm}
\usepackage[all,knot]{xy}

\newcommand{\cA}{\mathcal{A}}
\newcommand{\cL}{\mathcal{L}}

\newcommand{\Q}{\mathbb{Q}}
\newcommand{\N}{\mathbb{N}}

\newcommand{\ta}{t^{12}}
\newcommand{\tb}{t^{23}}
\newcommand{\tc}{t^{13}}
\newcommand{\tf}{t^{14}}
\newcommand{\td}{t^{24}}
\newcommand{\te}{t^{34}}

\newcommand{\lx}{\partial_x\lambda(x,y)}
\newcommand{\ly}{\partial_y\lambda(x,y)}
\newcommand{\la}{\partial_a\lambda(a,b)}
\newcommand{\lb}{\partial_b\lambda(a,b)}
\newcommand{\lc}{\partial_c\lambda(c,b)}
\newcommand{\FF}{\mathit{1} \mathit{F} \mathit{F}}
\newcommand{\TFF}{\mathit{2} \mathit{F} \mathit{F}}
\newcommand{\FT}{\mathit{4} T}
\newcommand{\tij}{t^{ij}}
\newcommand{\tjk}{t^{jk}}
\newcommand{\tki}{t^{ki}}
\newcommand{\tkl}{t^{kl}}
\newcommand{\ti}{t^{4i}}
\newcommand{\tj}{t^{4j}}
\newcommand{\tk}{t^{4k}}

\newcommand{\hl}{[\hat{l}]}
\newcommand{\AFF}{\cA^{\FF}}
\newcommand{\ATFF}{\cA^{\TFF}}
\newcommand{\LCC}{\cL^{CC}}
\newcommand{\Lfour}{[[\cL,\cL],[\cL,\cL]]}

\newtheorem{remark}{Remark}
\newtheorem{theorem}{Theorem}
\newtheorem{proposition}{Proposition}

\newtheorem{lemma}{Lemma}

\begin{document}
\title{Closed-Form Associators in Permutative Chord Diagrams}
\author{Peter Lee}
\maketitle

\begin{abstract}
Construction of a universal finite-type invariant can be reduced,
under suitable assumptions, to the solution of certain equations
(the hexagon and pentagon equations) in a particular graded associative
algebra of chord diagrams.  An explicit, closed-form solution to
these equations may, indirectly, give information about various
interesting properties of knots, such as which knots are ribbon.
However, while closed-form solutions (as opposed to solutions which
can only be approximated to successively higher degrees) are needed
for this purpose, such solutions have proven elusive, partly as a
result of the non-commutative nature of the algebra.  To make the
problem more tractable, we restrict our attention to solutions of
the equations in the subalgebra of horizontal chord diagrams, viewed as a graded unital permutative algebra -- where `permutative' means that $u[x,y]=0$ whenever $u$ has degree $\geq 1$. We show that this
restriction leads in a straightforward and fairly short way to a
reduction of the hexagon and pentagon equations to a simpler
equation taken over the algebra of power series in two commuting
variables. This equation had been found and solved explicitly by
Kurlin under a superficially different set of reduction assumptions,
which we show here are in fact equivalent to ours.  This paper thus
provides an alternative (simpler and shorter) derivation of Kurlin's equation.
\end{abstract}

\tableofcontents

\section{Introduction}

The fundamental theorem of finite-type invariants assures us that a
universal finite type invariant exists which takes values in the
associative algebra of chord diagrams on the circle (modulo the
4-term relation).  Construction of such an invariant can be reduced,
under suitable assumptions, to solving certain known equations (the
hexagon and pentagon equations) in the related algebra $\cA_n$ of
chord diagrams on $n$ vertical strands (modulo similar relations).
Solving these equations explicitly has proved to be extremely
difficult, partly because multiplication in $\cA_n$ is associative
but not commutative.

A universal formula for solutions to the equations has been given by
Drinfel'd \cite{Drinfel'd2} using integral methods.  However, for
applications, closed-form solutions (as opposed to solutions which
can only be approximated term-by-term) are often needed.  For
instance, it is expected that finite-type invariants can be
constructed which may give information about the genus of knots, or
which knots are ribbon knots \cite{DBN6}.  But finite degree
approximations are not generally expected to suffice in this regard
--- in particular, Ng \cite{Ng} has shown that no finite degree approximation to a
universal finite-type invariant can give information about ribbon
knots.

Lieberum \cite{Lieberum} has considered the pentagon and hexagon equations in a related context in which solutions are required to belong to (tensor powers of) the universal enveloping
algebra of gl(1$|$1), and found a closed--form solution. However, the difficulty of solving the
equations explicitly for universal finite-type invariants has
prompted researchers to consider various simplified versions of the
problem. One approach, proposed by Drinfel'd in \cite{Drinfel'd2},
is to assume solutions to the equations are group-like elements of
the relevant associative algebra, and to consider the image of the
equations under the logarithm function. The result is certain
equations over a Lie algebra $\cL$ which has the same generators as
the associative algebra. Drinfel'd discussed the solution of these
equations in the quotient $\cL / [[\cL,\cL],[\cL,\cL]]$, without
however giving explicit formulae.

Kurlin \cite{Kurlin} subsequently took up Drinfel'd's approach and
achieved a significant simplification. Specifically he showed that
the logarithmic hexagon and pentagon equations, in the given
quotient of the Lie algebra $\cL$ (which he called `compressed'
equations), were equivalent to a single equation, this time taken
over the algebra of ordinary power series in two formal commuting
variables $x$ and $y$:

\begin{equation}
\lambda(x,y)+e^{-y}\ \lambda(y,z)+e^x\ \lambda(x,z)\ =\
\frac{1}{xy}+\frac{e^{-y}}{yz}+\frac{e^x}{xz} \label{SimplifiedEq}
\end{equation}
Here $\lambda$ is a power series in $x$ and $y$ (for which we are
solving) and $z$ stands for $-(x+y)$.

Translating equations over a Lie algebra into an equation over an
ordinary commuting power series algebra constituted an important
development. Furthermore Kurlin was also able to solve this equation and
specify explicitly all of the solutions.

In this paper we take an alternative approach to simplifying the
hexagon and pentagon equations, which turns out to be equivalent but
is simpler and easier to extend. Like Kurlin we restrict ourselves
to the algebra of chord diagrams in which all chords are horizontal.
In the relevant case where our diagrams lie on three vertical
strands, we essentially get a power series algebra in three
non-commuting variables subject to certain additional relations.
Unlike Kurlin, we consider those equations at the associative
algebra level, but over the quotient algebra in which:

\begin{equation*}
u[x,y] = 0
\end{equation*}
where $x$ and $y$ are any elements of the algebra, and $u$
represents a product of one or more chords (we call this the `one
frozen foot', or 1FF, quotient). In a later part of the paper,  we
generalize somewhat to the case where $u$ is a product of any two or
more chords (the two frozen feet, or 2FF, quotient).

We note that passing to the 1FF quotient amounts to viewing the positive degree part of the horizontal chord diagram algebra as an algebra over the operad `Perm' -- that is, an associative algebra in which which $xyz=xzy$ for all elements $x,y$ and $z$ of the algebra (see \cite{Zin}).

We show that the hexagon and pentagon equations, modulo 1FF, again
reduce to Kurlin's equation defined over the algebra of commuting
power series in two variables. Moreover, we show that solving
Kurlin's compressed equations at the Lie algebra level is equivalent
to solving the hexagons and pentagon at the associative algebra
level modulo 1FF. Thus in particular we get an alternative, but
simpler and shorter, derivation of Kurlin's simplified equation
(\ref{SimplifiedEq}).

We also show how to solve the hexagon and pentagon equations in the
2FF quotient and show that all such solutions are derived, in an
explicitly given way, from solutions over 1FF.

Part 2 of the paper is organized as follows: in Section 2.1, we
review various relevant algebras of chord diagrams and their related
Lie algebras. In Section 2.2, we set forth the equations we wish to
solve and the assumptions we place $ab\ initio$ on the solutions. In
Section 2.3 we derive some basic properties of the chord diagram
algebras and of solutions to the hexagon and pentagon equations,
based on the given assumptions.  In Section 2.4 we show how the
hexagon and pentagon equations, in the 1FF quotient, reduce to
Kurlin's equation (\ref{SimplifiedEq}).  We also show that the
hexagon and pentagon equations hold (with group-like solutions)
modulo 1FF if and only if their logarithmic images hold (with Lie
series solutions) modulo $[[\cL,\cL],[\cL,\cL]]$, thus establishing
the equivalence of our results with the results of Kurlin. Finally,
in Section 2.5 we extend the main result of Section 2.4 to two
frozen feet. Specifically, we show how the hexagon and pentagon
equations can be solved in the 2FF quotient by a specific extension
of solutions in the 1FF quotient.

\paragraph{Acknowledgement.}  The author would like to thank Dror
Bar-Natan for suggesting the project which forms the subject of this
paper, and for many helpful discussions.

\section{Closed-Form Associator}

\subsection{Algebras of Chord Diagrams}

\subsubsection{The Algebras $\hat{\cA}$ and $\AFF$}

In this paper, we consider the algebra
\begin{equation}
\hat{\cA} :=\ \widehat{\cA ^{hor}}
\end{equation}
where $\cA^{hor}$ refers to the algebra of chord diagrams on $n$
vertical strands (modulo the 4T relation - see below), but in which
we allow only horizontal chords.  The $n$ is usually implicit, but
sometimes it is indicated, as in $\cA_n$.  The hat on the RHS above
means we take the formal completion. We can also define

\begin{equation}
\AFF := \hat{\cA} /\ \FF,
\end{equation}
where, again, 1FF stands for `1 Frozen Foot', i.e. allowing all
chords after the first (counting from the bottom) to commute.  In a
later portion of the paper we consider the quotient $\ATFF =
\hat{\cA} /\ \TFF$, in which all chords after the second (counting
from the bottom) commute.

All algebras in this paper are considered over the ground field
$\Q$.

\subsubsection{Notation}

In this paper we will mostly be concerned with the case $n = 3$. The
resulting algebra can be expressed in purely algebraic terms as an
(almost) commutative polynomial algebra on three letters $a, b, c$:
\begin{equation}
\hat{\cA} \ :=\ \mathbb{Q}\ll a,b,c \gg\ /\ \FT, \FF
\end{equation}
where for convenience we use the convention (followed through much
of the paper):

\[
\xy (-30,0)*{\xy (-5,0)*{a}; (0,0)*{=}; (8,0)*{\xy (-2,3)*{};
(-2,-3)*{}
**\dir{-}; (2,3)*{}; (2,-3)*{} **\dir{-}; (6,3)*{}; (6,-3)*{} **\dir{-}; (-2,0)*{}; (2,0)*{}
**\dir{.} \endxy} \endxy};
(0,0)*{\xy (-5,0)*{b}; (0,0)*{=}; (8,0)*{\xy (-2,3)*{}; (-2,-3)*{}
**\dir{-}; (2,3)*{}; (2,-3)*{} **\dir{-}; (6,3)*{}; (6,-3)*{} **\dir{-}; (2,0)*{}; (6,0)*{}
**\dir{.} \endxy}
\endxy};
(30,0)*{\xy (-5,0)*{c}; (0,0)*{=}; (8,0)*{\xy (-2,3)*{}; (-2,-3)*{}
**\dir{-}; (2,3)*{}; (2,-3)*{} **\dir{-}; (6,3)*{}; (6,-3)*{} **\dir{-}; (-2,0)*{}; (6,0)*{}
**\dir{.} \endxy}
\endxy}
\endxy
\]

The 4-term relation 4T here translates to

\begin{equation*}
[a \negthinspace +b \negthinspace, c]\ =\ [b\negthinspace+c,a]\ =\
[c \negthinspace +a,b] =\ 0
\end{equation*}
or equivalently

\begin{equation*}
[a \negthinspace + b \negthinspace +c, w] =0, \ \forall w \in \cA
\end{equation*}
and 1FF translates to:

\begin{equation*}
uxy\ =\ uyx\\
\end{equation*}
where

\begin{equation*}
u,x,y \in \mathcal{A}\ \text{and}\ deg\ u \geq 1
\end{equation*}

\subsubsection{Operations on $\hat{\cA}$}

There are $n$ `degeneracy' operations $\eta_i, \ i \in \{1, \dotsc,
n\}$, defined on $\cA_n$ (and therefore on $\hat{\cA_n}$) by the
fact that $\eta_i$ acts on individual chord diagrams by deleting the
$i$-th strand (and relabeling subsequent strands) if no chord ends
on the $i$-th strand, and sending the diagram to zero if any chord
ends on the $i$-th strand.

There are also strand doubling operations $\Delta_i$, where $i$ may
range over the number of strands in a diagram.  $\Delta_i$ acts by
doubling the $i$-th strand, and summing over the number of ways of
`lifting' all strands previously ending on the $i$-th strand to the
two new strands.  For instance, viewing $a$ as an element of $\cA_2$
in the obvious way, $\Delta_1(a) = b+c$.  In Part 3 of the paper, we
will only allow doubling of the first (shield) strand, that is we
only allow the doubling operation $\Delta_1$.

When $\Delta_i$ acts on a diagram with just one strand, we write
$\Delta$.  This gives rise to the notation $\Delta_i = (1\otimes
\dotsb \otimes \Delta \otimes \dotsb \otimes 1)$ where the $\Delta$
is in the $i$-th position.  We sometimes write this $(1 \dotso 1
\Delta 1 \dotso 1)$, omitting the tensor symbols.

The symmetric group $\mathbb{S}_n$ acts on diagrams with $n$ strands
by permuting the strands.  If $D$ is a diagram with $n$ strands, we
indicate the action of the permutation which sends $(12 ... n)$ to
$(xy ... z)$ on $D$ as $D^{xy \dotso z}$.  For instance, we have

\begin{equation*}
a^{132} = c
\end{equation*}

\subsubsection{Induced Lie Algebras}

The algebras $\cA_n$ (and their completions) induce Lie algebras
$\mathcal{L}_n$ (and their completions $\hat{\mathcal{L}_n}$) with
generators the single-chord diagrams in $\cA_n$, with product the
commutator bracket: $[x,y] := xy - yx$, and with the Lie ideal of relations generated by the defining relations for the $\cA_n$.

We can also form the quotient

\begin{equation*}
\LCC:= \hat{\cL} /\ [[\cL,\cL],[\cL,\cL]]
\end{equation*}
where the superscript `$CC$' stands for `commutators commute'.

\subsection{Criteria for $\Phi$}

Following Drinfel'd \cite{Drinfel'd2}, we are looking for invertible $R \in \hat{\cA}_2$ and $\Phi \in
\hat{\cA}_3$ satisfying the following primary conditions:

\begin{itemize}

\item The Hexagons

Positive Hexagon:
\begin{equation}
\label{hex}
(\Delta 1)R\ =\ \Phi \cdot R^{23} \cdot (\Phi^{-1})^{132} \cdot R^{13} \cdot \Phi^{312}\\
\end{equation}

Negative Hexagon:
\begin{equation}
\label{hexmin}
(\Delta 1)(R^{-1})\ =\ \Phi \cdot (R^{-1})^{23} \cdot (\Phi^{-1})^{132} \cdot (R^{-1})^{13} \cdot \Phi^{312}\\
\end{equation}

\item The Pentagon
\begin{equation}
\label{pentagon} \Phi^{123} \cdot (1\Delta 1)\Phi \cdot \Phi^{234} =
(\Delta 11) \Phi \cdot (11 \Delta) \Phi
\end{equation}

\end{itemize}

We also impose the following ancillary conditions:

\begin{itemize}

\item Symmetry

\begin{equation}
\Phi \cdot \Phi^{321} = 1 \label{unitary}
\end{equation}

\item Non-Degeneracy

\begin{equation}
\eta_1 \Phi = \eta_2 \Phi = \eta_3 \Phi = 0 \label{nondegeneracy}
\end{equation}

\item Group-like

\begin{equation}
\Phi = \exp(\phi), \text{ for some } \phi \in \hat{\cL}_3
\label{grouplike}
\end{equation}

\end{itemize}

It can be shown that (under the assumptions in this paper, and
particularly the restriction to horizontal chords, which forces $R$
to be symmetric) any $\Phi$ which satisfies the hexagons must
satisfy the symmetry (or `unitarity') condition (see
\cite{Drinfel'd2} at equation (2.10) $et\ seq.$, or \cite{DBN2}
Prop. 3.7).

We also take $R$ to have the form

\begin{equation}
\label{rmatrix} R=\exp(a)
\end{equation}
where $a$ is viewed as an element of $\cA_2$.

\subsection{Some Basic Manipulations in $\AFF$}

We are looking for a non-commutative power series $\Phi(a,b)$ which
is group-like in the sense that $\Phi(a,b)=\exp(\phi(a,b))$, where
$\phi(a,b)$ is a Lie series in $a$, $b$, i.e. $\phi(a,b) \in
\hat{\cL_3}$. In fact, we take the terms of $\phi$ to be of degree
at least two, so that the leading term of $\phi(a,b)$ is $[a,b]$ (up
to some multiplicative factor). We will now derive a number of
simple consequences of the assumptions we have made for $\AFF$,
$\phi$ and $\Phi$.

\begin{remark}
Lie Word Notation \end{remark} The following notation will be
useful: for $w$ a word in the alphabet $\{a,b\}$, that is $w=w_1 w_2
\ldots w_n$ for $w_i \in \{a,b\}$, we write $[w]$ for

\begin{equation*}
[w_1,[w_2,[ \ldots [w_{n-1},w_n] \ldots ]]]
\end{equation*}
However we do sometimes still write the commutator of two terms as
$[x,y]$.

\begin{lemma}
Using the definition of $a,b,c$ above,

(i) $[ab]=[bc] = [ca]$

(ii) Modulo 1FF, we have

\begin{equation*}
[ab]c = [ab](-a-b)
\end{equation*}
and more generally

\begin{equation*}
[ab]c^n = [ab] (-a-b)^n
\end{equation*}
Equivalently, in any expression which is pre-multiplied by the
commutator $[ab]$, we can take $c=(-a-b)$ modulo 1FF.

(iii) $[c^n ab] = [(-a-b)^n ab]$ in $\hat{\cA}$.
\label{eliminatec}
\end{lemma}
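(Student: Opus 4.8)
The plan is to work directly from the translated defining relations of $\hat{\cA}$ stated in the excerpt: the 4T relations $[a+b+c,w]=0$ for all $w$, together with the 1FF relations $uxy=uyx$ for $\deg u\ge 1$. Part (i) should come first and be essentially immediate. Since $a+b+c$ is central in $\hat{\cA}$ by the 4T relation, we have $[a+b+c,b]=0$, i.e. $[a,b]+[c,b]=0$, which rearranges to $[a,b]=[b,c]$; cyclically permuting (or using $[a+b+c,a]=0$) gives $[b,c]=[c,a]$, so $[ab]=[bc]=[ca]$. I would present this in one or two lines.

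For part (ii), I would start from the identity just proved and the centrality of $a+b+c$. Writing $c=-(a+b)+(a+b+c)$, pre-multiply by $[ab]$: the term $[ab]\cdot(a+b+c)$ equals $(a+b+c)\cdot[ab]$ by centrality, but the key point is that $[ab]$ already has degree $2\ge 1$, so 1FF applies to anything pre-multiplied by it. The cleanest route is: $[ab]\,c = [ab]\,(a+b+c) - [ab](a+b)$; I want to show the first term vanishes. Here I'd use that $[ab]=[bc]$ (part (i)) and $[bc]=[b,c]=bc-cb$, and then observe that $[ab](a+b+c)$, after substituting $[ab]=\tfrac13([ab]+[bc]+[ca])$ or directly, can be massaged — actually the more robust argument is that $(a+b+c)$ being central means $[ab](a+b+c)=(a+b+c)[ab]$, and separately $(a+b+c)[ab] = (a+b+c)(ab-ba)$. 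Hmm — I think the genuinely clean argument is different: since $[ab]$ has degree $\ge 1$, for the single chord $c$ we can move it past anything, but we need to kill it, not commute it. The right idea: $[ab]\cdot(a+b+c)$ — apply 1FF with $u=[a,b]$ (degree 2) viewing $a+b+c$ as a sum of single chords; then note $[ab](a+b+c) = [ab]\cdot s$ where $s=a+b+c$ is central, so this equals $s\cdot[ab]$; but also $[ab]\cdot s$, by 1FF (since $\deg[ab]\ge1$) equals... the point is both $[ab]s$ and $s[ab]$ are "the same" and we can compare with the 4T-derived fact that $[ab]=[(a+b+c)-\text{stuff}]$. Let me instead just say: the claim $[ab]c=[ab](-a-b)$ is equivalent to $[ab]\cdot(a+b+c)=0$, and this follows because $[ab](a+b+c)$, using $[ab]=[a,b]$ and expanding, can be rewritten using $[a+b+c,\,\cdot\,]=0$ to move $a+b+c$ leftward where it annihilates — I would verify $[ab](a+b+c)=[a,[b,(a+b+c)]] + (\text{terms with }a+b+c\text{ on the left})$ type identity, or more simply use the Jacobi/Leibniz identity $[a,b]s = [a,bs]-[a,s]b+\ldots$; regardless, the degree-$\ge 1$ hypothesis of 1FF is what makes the bookkeeping collapse. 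The iterated version $[ab]c^n=[ab](-a-b)^n$ then follows by induction: $[ab]c^{n}=([ab]c)c^{n-1}=[ab](-a-b)c^{n-1}$, and now $[ab](-a-b)$ again has degree $\ge 1$ and is a left-multiple of the degree-$\ge1$ element $[ab]$, so we can repeat — or, cleaner, once we know we may substitute $c=-a-b$ after any $[ab]$-prefix, apply it $n$ times. The "equivalently" clause is just the statement that this substitution is legitimate in any $[ab]$-prefixed expression, which is what the two displayed formulas say.

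For part (iii), the claim $[c^n ab]=[(-a-b)^n ab]$ holds in $\hat{\cA}$ itself (not just modulo 1FF), so here 1FF is unavailable and I must use only the 4T relations. Unpacking the Lie word notation, $[c^n ab]=[c,[c,[\cdots[c,[a,b]]\cdots]]]$ with $n$ copies of $c$. The key observation is that $[a,b]=[c,b]+[c,\text{something}]$? No — rather, I claim that $\mathrm{ad}_c$ and $\mathrm{ad}_{-a-b}$ agree on the inner element $[a,b]$ and on everything subsequently produced. Indeed $\mathrm{ad}_{a+b+c}=0$ on all of $\hat{\cA}$ by 4T, so $\mathrm{ad}_c=-\mathrm{ad}_{a+b}=\mathrm{ad}_{-a-b}$ as operators on the whole algebra. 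Therefore $[c,x]=[-a-b,x]$ for every $x\in\hat{\cA}$, and iterating, $[c^n ab]=[(-a-b)^n ab]$ on the nose. This is actually the slickest part and I'd lead with the operator identity $\mathrm{ad}_{a+b+c}=0$.

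The main obstacle I anticipate is the careful execution of part (ii) — specifically, pinning down exactly which 1FF moves and which 4T moves are combined to show $[ab]\cdot(a+b+c)=0$, since $a+b+c$ is central but nonzero, so its vanishing when left-multiplied by $[ab]$ genuinely requires the degree-$\ge 1$ hypothesis rather than centrality alone; getting the induction for $c^n$ to only ever apply 1FF to elements that are honestly left-multiples of a positive-degree element (so the hypothesis $\deg u\ge 1$ is satisfied) is the point needing care. Parts (i) and (iii) I expect to be short and mechanical once the $\mathrm{ad}_{a+b+c}=0$ observation is in hand.
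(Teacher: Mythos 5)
Your parts (i) and (iii) are fine and match the paper: both are immediate from the centrality of $a+b+c$ (your formulation $\mathrm{ad}_{a+b+c}=0$, hence $\mathrm{ad}_c=\mathrm{ad}_{-a-b}$ as operators, is exactly the intended argument for (iii)).

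Part (ii), however, is never actually closed, and you flag this yourself. You correctly reduce the claim to $[a,b](a+b+c)\equiv 0$ modulo 1FF and correctly use centrality to rewrite this as $(a+b+c)[a,b]$ --- but then you abandon that line, try applying 1FF with the wrong prefix ($u=[a,b]$), and go looking for a Jacobi/Leibniz identity, leaving the decisive step as unspecified ``bookkeeping.'' No such identity is needed: $(a+b+c)[a,b]$ is \emph{literally} an instance of the defining 1FF relation $u[x,y]=0$ with $u=a+b+c$, $x=a$, $y=b$ (here $a+b+c$ is a sum of single chords, each of degree $1\geq 1$, so by linearity $a[a,b]+b[a,b]+c[a,b]=0$ in the quotient). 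That one sentence is the ``simple calculation'' the paper alludes to: $[ab]c-[ab](-a-b)=[ab](a+b+c)=(a+b+c)[ab]=0$ modulo 1FF. Once this is in place, your treatment of $[ab]c^n=[ab](-a-b)^n$ goes through (congruences modulo the two-sided 1FF ideal are stable under right multiplication; or, more directly, $c^n-(-a-b)^n$ is a right multiple of the central element $a+b+c$, so $[ab]\bigl(c^n-(-a-b)^n\bigr)=(a+b+c)[ab](\cdots)=0$). So the gap is small but genuine: the single application of the 1FF relation with prefix $a+b+c$ is missing, and the manoeuvres you propose in its place (moving $a+b+c$ ``leftward where it annihilates,'' Leibniz expansions) do not by themselves produce the vanishing.
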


\begin{proof}
(i) This follows from the centrality of $a+b+c$, i.e.
$[a,c]=[a,c-a-b-c] = [a,-b] = [b,a]$ and similarly for the other
equality.

(ii) This is just a simple calculation.

(iii) This is another immediate consequence of the centrality of
$a+b+c$.
\end{proof}

\begin{proposition}
Modulo $[[\cL,\cL],[\cL,\cL]]$ (and hence also modulo 1FF and 2FF,
as these subalgebras contain $[[\cL,\cL],[\cL,\cL]]$),

(i) for $w$ a word on $\{a,b\}$ of length at least two (so that
$[w]$ is a commutator), we have $[abw]=[baw]$;

(ii) for $w$ a word on $\{a,b\}$ with $n \ a$'s and $m\ b$'s, we
have $[wab] = [a^nb^mab]$;

(iii) any Lie series $l(a,b,c)$ beginning in degree two (or higher)
can be written
\begin{equation}
l(a,b,c) =  [w(a,b)ab]
\end{equation}
where $w(a,b)$ is a unique commutative power series of words on
$\{a,b\}$ and the notation $[wab]$ has been extended in the obvious
way; and

(iv) for Lie series $l(a,b)$ and $l'(a,b)$, the following are
equivalent:

\begin{align*}
l = l' mod\ 1FF \\
l = l' mod\ [[\cL,\cL],[\cL,\cL]]
\end{align*}
\label{lieseries}
\end{proposition}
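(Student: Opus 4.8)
The plan is to prove (i)--(iii) in sequence using the Jacobi identity together with the centrality of $a+b+c$, and then derive (iv) as a formal consequence of (iii). For (i), I would argue that $[abw] - [baw] = [[a,b],w]$ by the Jacobi identity (expanding $[a,[b,w]] - [b,[a,w]]$), and since $w$ has length $\geq 2$ the term $[w]$ is itself a commutator, so $[[a,b],w]$ lies in $[[\cL,\cL],[\cL,\cL]]$ and hence vanishes in the quotient. For (ii), I would induct on the length of $w$: using part (i) (or rather its analogue allowing the moved pair to be $ab$) one can bubble-sort the letters of $w$, each swap of adjacent letters costing a term of the form $[\,\cdots[x,y]\cdots ab]$ with an inner commutator, which again lies in $[[\cL,\cL],[\cL,\cL]]$. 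The base cases and the bookkeeping of which swaps are legal (we need the trailing $ab$ to be untouched, and the swapped pair to sit inside a bracket so the correction term has two nested commutators) is the routine part.

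For (iii), the idea is that any Lie monomial of degree $\geq 2$ in $a,b,c$ can first be rewritten, using the centrality of $a+b+c$ (Lemma \ref{eliminatec}(i), which gives $[ab]=[bc]=[ca]$, and part (iii) of that lemma), as a Lie series in just $a$ and $b$; then, pushing all brackets to the innermost position via the Jacobi identity, every Lie monomial of degree $d$ becomes a sum of right-nested brackets $[w_1,[w_2,[\cdots[w_{d-1},w_d]\cdots]]]$, i.e. terms of the form $[wab]$, $[wba]$, $[wba]$, etc.; finally (i) normalizes the innermost pair to $ab$ and (ii) normalizes the outer word to $a^n b^m$. Uniqueness of the commutative power series $w(a,b)$ follows because the map $w(a,b) \mapsto [w(a,b)ab]$ is injective modulo $[[\cL,\cL],[\cL,\cL]]$: one can see this by choosing a suitable basis (e.g. a Lyndon-type basis) of $\hat{\cL}_3 / [[\cL,\cL],[\cL,\cL]]$, or by observing that the leading-degree behaviour of $[a^nb^m ab]$ already pins down the monomial $a^n b^m$. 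This injectivity claim is the one place where some care is genuinely needed; I would expect it to be the main obstacle, and I would handle it by exhibiting the set $\{[a^n b^m ab] : n,m \geq 0\}$ as linearly independent in the quotient, e.g. via the known structure of the metabelian quotient of a free Lie algebra, or by a direct dimension count in each degree.

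Finally, (iv): if $l = l'$ mod $[[\cL,\cL],[\cL,\cL]]$ then certainly $l = l'$ mod $1FF$, since $[[\cL,\cL],[\cL,\cL]] \subseteq 1FF$ (as noted in the statement of the Proposition). Conversely, suppose $l = l'$ mod $1FF$, with $l, l'$ Lie series in $a,b$ beginning in degree $\geq 2$. By (iii), write $l = [w(a,b)ab]$ and $l' = [w'(a,b)ab]$ modulo $[[\cL,\cL],[\cL,\cL]]$; it suffices to show $w = w'$. The difference $l - l' = [(w-w')ab]$ is then a Lie series lying in $1FF$, i.e. it is a $\Q$-linear combination of elements $u(xy-yx)$ with $\deg u \geq 1$; but by (ii)/(iii) such an element is also expressible in the normal form $[v(a,b)ab]$ modulo metabelian terms, and tracking through the 1FF relations one checks that the only Lie series of this shape that is killed by 1FF is zero — intuitively because 1FF only forces commutativity in the presence of a genuine left factor $u$ of positive degree, which a single right-nested bracket $[w ab]$ does not supply without already being metabelian. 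Hence $w = w'$ and $l = l'$ mod $[[\cL,\cL],[\cL,\cL]]$. I would expect the cleanest writeup of this last step to again invoke the normal form of (iii) rather than manipulating the 1FF relations directly.
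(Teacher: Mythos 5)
Your treatment of (i)--(iii) is essentially the paper's: the paper proves (i) by the very same Jacobi computation $[a,[b,[w]]]-[b,[a,[w]]]=[[a,b],[w]]$ (the correction term being a bracket of brackets because $w$ has length at least two) and then declares (ii)--(iv) immediate; your bubble-sort for (ii), the elimination of $c$ via centrality of $a+b+c$, and the appeal to the free metabelian Lie algebra on $a,b$ for the uniqueness in (iii) are the intended elaborations (the paper effectively defers the basis statement to the remark that follows and to Kurlin).

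The genuine gap is in the converse direction of (iv). You reduce it, correctly, to the claim that a Lie series of the form $[v(a,b)ab]$ lying in the span of the elements $u[x,y]$ with $\deg u\geq 1$ must vanish, but your only justification is the sentence beginning ``intuitively because,'' and the suggested clean writeup --- ``invoke the normal form of (iii)'' --- cannot do the job by itself: that normal form is a statement modulo $\Lfour$, and membership in the strictly larger associative ideal 1FF is invisible to it. Nor is the intuition sound as stated: expanded into monomials, $[v\,ab]$ contains plenty of terms with positive-degree left factors, and whether a linear combination lands in the 1FF span is a linear-algebra question about the whole expression, not about individual terms ``supplying'' a frozen foot. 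What is actually needed --- and is the entire content of (iv), since the forward direction is trivial --- is a linear-independence statement modulo 1FF itself: using $[a^nb^mab]=(-1)^{n+m}[ab]a^nb^m$ mod 1FF (Proposition~\ref{liemonomials}, which needs only parts (i)--(ii)), one must show that the elements $[ab]a^nb^m$ are linearly independent in $\AFF$. One way to supply this: write $\hat{\cA}\cong\Q\langle\langle a,b\rangle\rangle[[s]]$ with $s=a+b+c$ central, note that the $s^0$-component of any element of the 1FF span again lies in the 1FF span of $\Q\langle\langle a,b\rangle\rangle$, and that the positive-degree part of $\Q\langle\langle a,b\rangle\rangle$ modulo its 1FF relations has basis ``first letter tensor commutative monomial in the remaining letters,'' under which $[ab]a^nb^m\mapsto a\otimes a^nb^{m+1}-b\otimes a^{n+1}b^m$, visibly independent. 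With such a lemma your argument for (iv) closes (and it also secures the 1FF side of the uniqueness in (iii)); without it, (iv) is asserted rather than proved.
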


\begin{proof}
The Jacobi relation gives us
\begin{equation}
[abw]=-[b[w]a] -[[w]ab] = -[b[w]a] = [ba[w]] \notag
\end{equation}

The remaining assertions follow immediately.
\end{proof}

The following is clear from the previous proposition.

\begin{proposition}
Modulo 1FF, if $w$ is a word on $\{a,b\}$ with $n \ a$'s and $m\
b$'s, we have $[wab] = (-1)^{n+m}[ab]a^nb^m.$ \label{liemonomials}
\end{proposition}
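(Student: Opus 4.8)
The plan is to first normalise the word $w$ and then unwind the resulting iterated commutator from its innermost bracket outward, using nothing beyond the basic 1FF fact that a commutator is killed by left-multiplication by any positive-degree element.

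First I would invoke Proposition~\ref{lieseries}(ii) --- which, like all of that proposition, holds modulo 1FF --- to replace $w$ by the sorted word $a^nb^m$, reducing the claim to proving
\begin{equation*}
[a^nb^mab] \;=\; (-1)^{n+m}[ab]\,a^nb^m
\end{equation*}
in $\AFF$. Next I would record the two elementary consequences of the relation $u\,xy = u\,yx$ that the computation uses: taking $u = z$ with $\deg z \geq 1$ gives $z[x,y] = 0$; and taking $u = [ab]$ (which has degree $2$) and iterating shows that every letter standing to the right of a leading $[ab]$ commutes, so that in particular $[ab]\,b^m a^n = [ab]\,a^n b^m$.

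With these in hand the rest is a short induction on $n+m$. The innermost bracket of $[a^nb^mab]$ is $[a,b]=[ab]$; bracketing it on the left with a $b$ gives $[b,[ab]] = b[ab] - [ab]b = -[ab]\,b$, the first term vanishing since $z[x,y]=0$. Repeating this $m$ times absorbs all of the $b$'s into $(-1)^m[ab]\,b^m$, and each of the $n$ outermost brackets with an $a$ behaves in the same way (the $a[ab]$ term again vanishes), so that after all of them one is left with $(-1)^{n+m}[ab]\,b^m a^n$; re-sorting the trailing monomial as above gives the asserted formula.

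I do not anticipate a real obstacle; the one place that needs care is the asymmetry of the 1FF relation. Since it is the \emph{first} (bottommost) foot that is frozen, it is left-multiplication --- not right-multiplication --- by a positive-degree element that annihilates a commutator, and this is exactly why unwinding the bracket deposits $[ab]$ on the left with a commuting monomial trailing behind, rather than the reverse. (One could also bypass Proposition~\ref{lieseries}(ii) entirely and run the same induction directly on an arbitrary word $w$, at the cost of invoking the ``commute freely after $[ab]$'' fact at each step to keep the accumulated monomial sorted; the sign and exponents come out identically.)
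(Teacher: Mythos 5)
Your proposal is correct and takes essentially the route the paper intends: the paper states the proposition is ``clear from the previous proposition,'' and your argument---normalising $w$ via Proposition~\ref{lieseries}(ii) and then unwinding the iterated bracket using $z[x,y]=0$ for $\deg z\geq 1$ together with the commuting tail after $[ab]$---is exactly the computation left implicit there. Your induction is also in substance the same calculation the paper later carries out modulo 2FF in the proof of Lemma~\ref{lemmaformofw}, of which the 1FF statement is the immediate reduction, and your remark about the one-sided (left) nature of the 1FF relation is on point.
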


\begin{remark}
Lie Algebra Decomposition and Vector Space Bases

It follows from Lemma \ref{eliminatec} and Propositions
\ref{lieseries} and \ref{liemonomials} that:

(i) As Lie algebras, $\cL \cong \Q c \oplus \cL (a,b)$, where $\cL
(a,b)$ is the Lie algebra generated over $\Q$ by $a,b$ modulo 4T.

(ii) $\LCC = \hat{\cL}/ [[\cL,\cL],[\cL,\cL]]$ has vector space
basis $\{a,b,c\}\ \cup\ \{[a^n b^m ab]: \ n,m \geq 0\}$.
\end{remark}

This remark recaptures some results from \cite{Kurlin}, see in
particular Proposition 3.4 thereof.

We now explore the implication of the symmetry requirement
(\ref{unitary}) for the form $\Phi$ must take.

\begin{proposition}
Taking $\Phi(a,b) = \exp \phi(a,b)$ where $\phi(a,b) = [w(a,b)ab]$
and $w(a,b)$ is a power series in $a$ and $b$, the symmetry
requirement (\ref{unitary}) is equivalent to $w(a,b)$ being
symmetric. \label{unitaryPhi}
\end{proposition}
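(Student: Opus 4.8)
The plan is to analyze the symmetry condition $\Phi \cdot \Phi^{321} = 1$ by passing to logarithms. Since $\Phi = \exp\phi$, the relation $\Phi^{321} = \Phi^{-1}$ becomes $\phi^{321} = -\phi$, where the superscript $321$ denotes the strand-reversing permutation action. So the entire statement reduces to understanding how the permutation $(123)\mapsto(321)$ acts on $\phi(a,b) = [w(a,b)ab]$ and showing that $\phi^{321} = -\phi$ holds if and only if $w$ is a symmetric power series, i.e. $w(a,b) = w(b,a)$.

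\medskip

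First I would pin down the action of the strand-reversal permutation on the generators. Reversing the order of three strands swaps strand $1$ and strand $3$ and fixes strand $2$; in terms of the chord-diagram pictures for $a$ (chord on strands $1,2$), $b$ (chord on strands $2,3$) and $c$ (chord on strands $1,3$), this sends $a \mapsto b$, $b \mapsto a$, and $c \mapsto c$. Hence on any Lie word, $(\,\cdot\,)^{321}$ acts by exchanging $a \leftrightarrow b$. Applying this to $\phi = [w(a,b)ab]$ we get $\phi^{321} = [w(b,a)ba]$. Using Proposition~\ref{lieseries}(i) (namely $[\,w(b,a)ba\,] = [\,w(b,a)ab\,]$ since $w(b,a)$ is a commutative power series of words of length $\geq 0$ multiplying the commutator $[ab]$ — more precisely, applying the identity $[abw]=[baw]$ with the roles set up appropriately, or directly that swapping the two innermost letters of the bracket flips a sign only when... ) — I need to be careful here: the clean route is to use the normal form. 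By Proposition~\ref{lieseries}, every such Lie series is uniquely of the form $[\,v(a,b)ab\,]$ with $v$ a commutative power series, and two such are equal iff the power series agree. So I would rewrite $[w(b,a)ba]$ in this normal form: since $[xy] = -[yx]$ for single letters and, inductively via Jacobi (Proposition~\ref{lieseries}(i),(ii)), the word $w(b,a)ba$ reorders to $\pm\, (\text{sorted word})\,ab$; the key computation is that $[\,w(b,a)\,ba\,] = -[\,w(b,a)\,ab\,]$ fails naively, so instead I track: $[\,ba\,] = -[\,ab\,]$, and prepending the letters of $w(b,a)$ and then putting everything in the $[\cdots ab]$ normal form, the claim to verify is that $\phi^{321} = [\,w(b,a)\,ab\,]$ up to the sign $-1$, giving $\phi^{321} = -\phi \iff [w(b,a)ab] = [w(a,b)ab] \iff w(a,b) = w(b,a)$.

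\medskip

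So the concrete key steps are: (1) establish $a^{321}=b$, $b^{321}=a$, $c^{321}=c$ from the diagrammatics; (2) deduce $\phi^{321} = [w(b,a)\,b\,a]$; (3) using Proposition~\ref{lieseries}(i) and the uniqueness of the normal form $[v(a,b)ab]$, show $[w(b,a)\,b\,a] = -[w(b,a)\,a\,b]$, hence $\phi^{321} = -[w(b,a)ab]$; (4) invoke that $\Phi\Phi^{321}=1 \iff \phi + \phi^{321} = 0$ (valid because $\phi$ and $\phi^{321}$ lie in the Lie algebra and $\exp$ is injective on positive-degree Lie series, or just because $\Phi^{321}=\exp(\phi^{321})$ and $\Phi^{-1}=\exp(-\phi)$); (5) conclude via uniqueness of the normal form that this is equivalent to $w(a,b)=w(b,a)$.

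\medskip

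The main obstacle I expect is step (3) — correctly bookkeeping the sign and the reordering when converting $[w(b,a)\,b\,a]$ into the canonical form $[\,(\text{commutative series})\,ab\,]$. One has to be confident that the ``extra'' letters coming from $w(b,a)$ can be freely commuted past each other (true modulo $[[\cL,\cL],[\cL,\cL]]$, hence modulo 1FF, by Proposition~\ref{lieseries}) and that the only genuine sign contribution is the single transposition of the innermost pair $b,a \rightsquigarrow a,b$. Once that sign is correctly identified as exactly $-1$, independent of the degree of the word, the equivalence with symmetry of $w$ is immediate from the uniqueness clause of Proposition~\ref{lieseries}(iii). A secondary minor point is justifying $\Phi^{321}=\Phi^{-1} \iff \phi^{321}=-\phi$, which is routine since the permutation action is an algebra automorphism commuting with $\exp$ and $\exp$ is bijective on the relevant completed space.
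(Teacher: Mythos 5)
Your proposal is correct and follows essentially the paper's own argument: the paper likewise records $a^{321}=b$, $b^{321}=a$, $c^{321}=c$, obtains $\Phi^{321}=1+[w(b,a)ba]=1-[w(b,a)ab]$ and compares with $\Phi^{-1}=1-[w(a,b)ab]$, the only cosmetic difference being that it handles the exponential by the 1FF/2FF linearization $\exp([wab])=1+[wab]$ rather than by cancelling $\exp$ through injectivity as you do, and both arguments rest on the same uniqueness of the normal form $[v(a,b)ab]$ in the quotient. The sign step you flag as the main obstacle in (3) is in fact exact and needs no Jacobi manipulation or passage to a quotient: the iterated bracket $[w_1,[w_2,[\dots[a,b]\dots]]]$ is linear in its innermost factor and the prefix letters are applied in the same order, so $[w(b,a)ba]=-[w(b,a)ab]$ on the nose.
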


\begin{proof}
A key simplification valid in both 1FF and 2FF is the following
`linearization' effect:

\begin{align*}
\exp([w(a,b)ab]) & = 1 + [w(a,b)ab] + 1/2! [w(a,b)ab]^2 + \dotso \\
& = 1 + [w(a,b)ab]
\end{align*}

As a result we will look for $\Phi$ of the form:

\begin{equation}
\Phi(a,b) = 1 + [w(a,b)ab] \label{phi}
\end{equation}

Now $\Phi$ must also satisfy the `symmetry' (or `unitarity')
property, $\Phi^{-1} = \Phi^{321}$. Again using the `linearization'
effect valid in 1FF or 2FF we have:

\begin{align}
\Phi(a,b)^{-1} & = 1 - [w(a,b)ab] + [w(a,b)ab]^2 - \dotso \notag \\
&= 1 - [w(a,b)ab] \label{Phiinverse}
\end{align}

Note that if the strands in a chord diagram are permuted according
to the `unitary' transformation $(123)\ \rightarrow\ (321)$ the
chords $a,b,c$ transform as follows:

\begin{align*}
a\ \rightarrow\ b\\
b\ \rightarrow\ a\\
c\ \rightarrow\ c
\end{align*}

So we get:

\begin{align*}
\Phi(a,b)^{321} = & 1 + [w(b,a)ba] \\
= & 1 - [w(b,a)ab]
\end{align*}

Comparing the expressions for $\Phi^{-1}$ and $\Phi^{321}$, we see
that equation (\ref{unitary}) is equivalent to $w$'s being
symmetric.
\end{proof}

The following lemma further refines the form that a Lie series may
take over 1FF or 2FF.

\begin{lemma}
Modulo 2FF, a Lie series $[w(a,b)ab]$ where $w(a,b)$ is an ordinary
power series in commutative variables ${a,b}$ can be written:

\begin{equation}
[w(a,b)ab] = [ab]\lambda(a,b) - a[ab]\partial _a \lambda(a,b) -
b[ab]\partial _b \lambda(a,b) \label{formofw2FF}
\end{equation}
where $\lambda(a,b) := w(-a,-b)$.
Modulo 1FF this reduces to:

\begin{equation}
[w(a,b)ab] = [ab]\lambda(a,b) \label{formofw1FF}
\end{equation}
\label{lemmaformofw}
\end{lemma}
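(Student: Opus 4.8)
The plan is to reduce to a single commutative monomial $w(a,b)=a^nb^m$ and prove the resulting identity by induction on $n+m$, peeling off one generator at a time by unwinding the left-normed bracket and using the 2FF relations to discard the terms that do not survive. Two elementary consequences of 2FF drive the argument. Since $u[x,y]=0$ whenever $\deg u\geq 2$, any degree-two prefactor annihilates a commutator, so $a^2[ab]=ab[ab]=b^2[ab]=0$ modulo 2FF (and likewise after multiplying on the right by anything). Since $uxy=uyx$ for $\deg u\geq 2$, an expression premultiplied by any element of degree $\geq 2$ --- in particular by $[ab]$, by $a[ab]$, or by $b[ab]$ --- depends only on the multiset of the remaining letters, so $[ab]\,a^nb^m$, $a[ab]\,a^{n-1}b^m$ and $b[ab]\,a^nb^{m-1}$ are unambiguous and the right-hand side of (\ref{formofw2FF}) is well defined modulo 2FF. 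Because $[w(a,b)ab]$ is $\Q$-linear in $w$ and equals $[a^nb^mab]$ on a monomial (Proposition \ref{lieseries}(ii)--(iii)), it suffices to prove, modulo 2FF, that
\[
[a^nb^mab]\;=\;(-1)^{n+m}\bigl([ab]\,a^nb^m-n\,a[ab]\,a^{n-1}b^m-m\,b[ab]\,a^nb^{m-1}\bigr);
\]
summing this against the coefficients of $w$ and using $\lambda(a,b)=w(-a,-b)$ (so that $\sum c_{nm}(-1)^{n+m}a^nb^m=\lambda$, $\sum c_{nm}(-1)^{n+m}n\,a^{n-1}b^m=\partial_a\lambda$, and similarly for $\partial_b\lambda$) then recovers (\ref{formofw2FF}).

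For the monomial identity I would induct on $n+m$, with base case $[ab]$. For $n\geq 1$, write $[a^nb^mab]=[a,[a^{n-1}b^mab]]$, substitute the inductive formula for $[a^{n-1}b^mab]$, and expand each bracket $[a,\,\cdot\,]$ as $a(\,\cdot\,)-(\,\cdot\,)a$. Using the two facts above one gets $[a,[ab]a^{n-1}b^m]=a[ab]a^{n-1}b^m-[ab]a^nb^m$, whereas $[a,a[ab]a^{n-2}b^m]=-a[ab]a^{n-1}b^m$ and $[a,b[ab]a^{n-1}b^{m-1}]=-b[ab]a^nb^{m-1}$ --- in the last two, the term $a(\,\cdot\,)$ vanishes because $a^2[ab]$, respectively $ab[ab]$, is zero modulo 2FF, while the term $(\,\cdot\,)a$ is simplified by reordering it past the degree-$\geq 2$ prefactor. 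Collecting the contributions promotes the coefficient $n-1$ to $n$ on the $a[ab]$-term, keeps the $[ab]$- and $b[ab]$-terms with the correct multiplicities, and flips the sign from $(-1)^{n-1+m}$ to $(-1)^{n+m}$, which is exactly the claimed formula; the case $n=0$ is symmetric, peeling off a $b$ instead.

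Finally, modulo 1FF a degree-one prefactor already kills a commutator, so $a[ab]=b[ab]=0$, the two correction terms disappear, and (\ref{formofw1FF}) follows (this is also immediate from Proposition \ref{liemonomials}). I expect the only real work to be the bookkeeping in the inductive step --- tracking which expanded terms vanish modulo 2FF and checking that the survivors reassemble, with the right signs and multiplicities, into the $\lambda$, $\partial_a\lambda$, $\partial_b\lambda$ pattern --- but each simplification there is a single application of one of the two displayed 2FF facts, so I do not anticipate any genuine obstacle.
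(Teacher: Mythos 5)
Your proposal is correct and follows essentially the same route as the paper: reduce by linearity to a monomial $a^nb^m$, prove the identity $(-1)^{n+m}[a^nb^mab]=[ab]a^nb^m-na[ab]a^{n-1}b^m-mb[ab]a^nb^{m-1}$ by peeling one letter off the left-normed bracket and killing the extra terms with the two 2FF facts, then sum against the coefficients of $w$ to obtain the $\lambda,\ \partial_a\lambda,\ \partial_b\lambda$ pattern, with the 1FF case following since degree-one prefactors already annihilate $[ab]$. The only difference is cosmetic: the paper carries out the induction for $a^n$ and asserts the generalization to $a^nb^m$ ``readily,'' whereas you run the mixed-monomial induction explicitly.
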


\begin{proof}
It is enough to check the case where $\lambda$ is a monomial.  We
have
\begin{equation*}
(-1)^n[a^nab] = [ab]a^n - a[ab]na^{n-1}
\end{equation*}

Indeed, the result is clear for $n=1$, and for $n>1$ we have:

\begin{align*}
(-1)^n[a^nab] &= (-1)^n[a \cdot a^{n-1}ab] \\
&= (-1)\bigl\{a[ab]a^{n-1} - a^2[ab](n-1)a^{n-2} - [ab]a^{n-1}\cdot
a \\
& \quad \quad \quad \quad + a[ab](n-1)a^{n-2}\cdot a \bigr\} \\
&= [ab]a^n -a[ab]na^{n-1}
\end{align*}

This generalizes readily to

\begin{equation*}
(-1)^{n+m}[a^nb^mab] = [ab]a^nb^m - a[ab]na^{n-1}b^m -
b[ab]a^nmb^{m-1}
\end{equation*}
and the result follows over 2FF.  The reduction to 1FF is immediate.
\end{proof}

Based on the preceding lemmas we have:

\begin{proposition}
Modulo 2FF, $\Phi$ must take the form:

\begin{equation}
\Phi(a,b) = 1 + [ab]\lambda(a,b) - a[ab]\partial_a\lambda(a,b) -
b[ab]\partial_b\lambda(a,b) \label{Phi2FF}
\end{equation}
with $\lambda(a,b)$ a commutative, symmetric power series, and
modulo 1FF, the form:

\begin{equation}
\Phi(a,b) = 1 + [ab]\lambda(a,b) \label{Phi1FF}
\end{equation}
with $\lambda(a,b)$ a commutative, symmetric power series.
\end{proposition}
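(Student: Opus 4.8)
The plan is to assemble this directly from the three results immediately preceding it, since the proposition is essentially a repackaging of them. First I would recall the standing hypotheses: $\Phi = \exp\phi$ with $\phi \in \hat{\cL}_3$ a Lie series all of whose terms have degree $\geq 2$. By Proposition \ref{lieseries}(iii) — which is valid modulo $[[\cL,\cL],[\cL,\cL]]$ and hence in both the 1FF and 2FF quotients — such a $\phi$ can be written uniquely as $\phi = [w(a,b)ab]$ for a commutative power series $w(a,b)$ in the two variables $a,b$, the variable $c$ having been eliminated via the centrality of $a+b+c$ (cf.\ Lemma \ref{eliminatec}). This canonical form is what the rest of the argument operates on.

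Next I would invoke the ``linearization'' observation already established in the proof of Proposition \ref{unitaryPhi}: because $[w(a,b)ab]$ is a commutator $[a,b]$ pre-multiplied by a positive-degree (for 1FF) — respectively degree $\geq 2$ (for 2FF) — word, its square already vanishes in the respective quotient, and likewise all higher powers. Hence $\exp\phi = 1 + [w(a,b)ab]$ in both $\AFF$ and $\ATFF$, so $\Phi = 1 + [w(a,b)ab]$ there.

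Then I would apply Lemma \ref{lemmaformofw} verbatim. Setting $\lambda(a,b) := w(-a,-b)$, that lemma rewrites $[w(a,b)ab]$ as $[ab]\lambda(a,b) - a[ab]\partial_a\lambda(a,b) - b[ab]\partial_b\lambda(a,b)$ modulo 2FF and as $[ab]\lambda(a,b)$ modulo 1FF; substituting into $\Phi = 1 + [w(a,b)ab]$ gives the two displayed forms (\ref{Phi2FF}) and (\ref{Phi1FF}). Finally, for the symmetry clause, Proposition \ref{unitaryPhi} says the unitarity condition (\ref{unitary}) is equivalent to $w(a,b)$ being symmetric; and since $\lambda(a,b) = w(-a,-b)$, we have $w(a,b) = w(b,a)$ iff $\lambda(a,b) = \lambda(b,a)$, so $w$ is symmetric iff $\lambda$ is. Thus in both quotients $\Phi$ necessarily has the claimed form with $\lambda$ a commutative, symmetric power series.

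I do not expect a genuine obstacle here; the only points requiring care are bookkeeping ones. First, one should confirm that the degree $\geq 2$ hypothesis on $\phi$ is exactly what lets Proposition \ref{lieseries}(iii) apply, so that $w$ is well-defined (and the leading term of $\phi$ is $[a,b]$ up to scalar, matching the earlier normalization). Second, one should check that the vanishing of $[w(a,b)ab]^2$ really holds modulo 2FF and not merely modulo 1FF — which it does, since the relevant product still contains the bracket $[a,b]$ preceded by a word of degree $\geq 2$. Beyond these two checks the proof is a straightforward substitution.
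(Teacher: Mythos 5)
Your argument is correct and is exactly the route the paper intends: the paper states this proposition with no separate proof, prefacing it with ``Based on the preceding lemmas,'' i.e.\ precisely the combination of Proposition \ref{lieseries}(iii), the linearization observation from the proof of Proposition \ref{unitaryPhi}, Lemma \ref{lemmaformofw}, and the symmetry equivalence, which is what you assemble. Your two bookkeeping checks (applicability of the degree $\geq 2$ hypothesis and the vanishing of $[w(a,b)ab]^2$ modulo 2FF) are also sound.
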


\subsection{Solving the Hexagons Modulo 1FF}

\subsubsection{The Positive Hexagon}

We will gradually work the positive hexagon into a comparatively
simpler form, which can be solved.  Specifically, we will prove:

\begin{theorem}
Modulo 1FF, and under the assumptions (\ref{unitary}),
(\ref{nondegeneracy}) and (\ref{grouplike}) on $\Phi$, and the
assumption (\ref{rmatrix}) on $R$, the positive hexagon (\ref{hex})
becomes:

\begin{equation}
\label{hexplus} \lambda(a,b)+e^{-b}\ \lambda(b,c)+e^a\ \lambda(a,c)\
=\ \lbrace\frac{1}{ab}+\frac{e^{-b}}{bc}+\frac{e^a}{ac}\rbrace
\end{equation}
\label{thmhexplus}
\end{theorem}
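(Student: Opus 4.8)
The plan is to substitute each ingredient of \eqref{hex} in its normal form, use the drastic collapse that the 1FF relation forces to reduce everything to an identity of the shape $[ab]\cdot(\text{power series})=[ab]\cdot(\text{power series})$ in the commuting variables $a,b$, and then cancel the (injective) left multiplication by $[ab]$.

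First I would assemble the ingredients. Since $R=\exp(a)$ and $\Delta_1 a=b+c$ we get $(\Delta 1)R=\exp(b+c)$, and likewise $R^{23}=\exp(b)$, $R^{13}=\exp(c)$. Computing the action of the strand permutations $312$ and $132$ on $a,b,c$, and using $[ab]=[bc]=[ca]$ (Lemma \ref{eliminatec}) together with the symmetry of $\lambda$ (Proposition \ref{unitaryPhi}), the two permuted $\Phi$-factors become $(\Phi^{-1})^{132}=1+[ab]\lambda(b,c)$ and $\Phi^{312}=1+[ab]\lambda(a,c)$, while $\Phi=1+[ab]\lambda(a,b)$ by \eqref{Phi1FF}.

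Now the linearization. Since $[ab]$ has degree $\geq 1$, any positive-degree factor annihilates a bracket on its left ($u[x,y]=0$); hence every product of \emph{two or more} of the factors $[ab]\lambda(\cdot,\cdot)$ vanishes, so expanding the right side of \eqref{hex} leaves only $\exp(b)\exp(c)$ together with the three terms linear in $\lambda$. Using that $[ab]\cdot XY=[ab]\cdot YX$ (so everything standing to the right of an $[ab]$ may be treated as commuting, whence $[ab]\exp(b)\exp(c)=[ab]\exp(b+c)$) and that $\exp(b)\cdot[ab]=\exp(b)\exp(c)\cdot[ab]=[ab]$, the hexagon becomes
\[
\exp(b+c)-\exp(b)\exp(c)=[ab]\bigl\{\lambda(a,b)\exp(b+c)+\lambda(b,c)\exp(c)+\lambda(a,c)\bigr\}.
\]
On the right, which is premultiplied by $[ab]$, I may substitute $c=-(a+b)$ (Lemma \ref{eliminatec}(ii)); pulling out the common factor $\exp(-a)$ turns the brace into $\exp(-a)\bigl\{\lambda(a,b)+e^{-b}\lambda(b,c)+e^{a}\lambda(a,c)\bigr\}$, which is $\exp(-a)$ times the left side of \eqref{hexplus}.

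The remaining and hardest step is to pin down the inhomogeneous term: one must show that in $\AFF$
\[
\exp(b+c)-\exp(b)\exp(c)=[ab]\exp(-a)\Bigl(\tfrac{1}{ab}+\tfrac{e^{-b}}{bc}+\tfrac{e^{a}}{ac}\Bigr),
\]
where $c=-(a+b)$ and the parenthesized expression is read as the power series left after clearing denominators — the apparent poles cancel, leaving an honest series beginning with $-\tfrac12$, in agreement with the fact that $\exp(b+c)-\exp(b)\exp(c)$ starts in degree two as $-\tfrac12[b,c]=-\tfrac12[ab]$. This is a genuine computation: one expands both sides degree by degree and reconciles them using the $4T$ relation, or recognizes the underlying generating-function identity. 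Granting it, $[ab]\exp(-a)$ times the left side of \eqref{hexplus} equals $[ab]\exp(-a)$ times its right side; since $f\mapsto[ab]f$ is injective on commuting power series in $a,b$ (Propositions \ref{lieseries}, \ref{liemonomials}) and left multiplication by the unit $\exp(-a)$ is invertible, dividing out $[ab]\exp(-a)$ gives \eqref{hexplus}.
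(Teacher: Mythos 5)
Your reduction of the hexagon to
\begin{equation*}
e^{b+c}-e^b e^c=[ab]\bigl\{\lambda(a,b)e^{b+c}+\lambda(b,c)e^c+\lambda(a,c)\bigr\}
\end{equation*}
is correct and is exactly the paper's first step: the permutation action on $a,b,c$, the symmetry of $\lambda$, the annihilation $(\text{positive degree})\cdot[ab]=0$ which kills all terms of order $\geq 2$ in the bracket, and the identities $e^b[ab]=[ab]$, $[ab]e^be^c=[ab]e^{b+c}$ are all used in the same way there. The final step of substituting $c=-(a+b)$ after the foot, factoring out $e^{-a}$, and cancelling $[ab]e^{-a}$ (justified by the linear independence of the elements $[ab]a^nb^m$ modulo 1FF) also matches the paper's ``dropping the $[a,b]$ on both sides and multiplying by $e^a$''.

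The genuine gap is the step you yourself flag as the hardest one and then simply grant: the closed-form evaluation of the ``triangle''
\begin{equation*}
e^{b+c}-e^b e^c=[ab]\Bigl(\frac{e^{-a}}{ab}+\frac{e^c}{bc}+\frac{1}{ac}\Bigr)\quad\text{mod }1FF,
\end{equation*}
equivalently your $[ab]e^{-a}\bigl(\tfrac{1}{ab}+\tfrac{e^{-b}}{bc}+\tfrac{e^a}{ac}\bigr)$. Saying ``one expands both sides degree by degree and reconciles them using $4T$, or recognizes the underlying generating-function identity'' is not a proof; this identity is precisely the nontrivial content of the theorem (it is Proposition \ref{postriangle} in the paper), and your degree-two check ($-\tfrac12[ab]$ on both sides) only verifies the lowest term. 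The paper closes this gap with a short closed-form argument rather than a term-by-term reconciliation: split each exponential off its ``foot'' as $e^x=1+x\,\frac{e^x-1}{x}$ (keeping in mind that the denominator does not act on the foot), regroup the resulting expression into differences of the form $x\frac1x-y\frac1y$ with $x,y\in\{b,c,b+c\}$, and apply the elementary Lemma \ref{xxminusyy}, $x\frac1x-y\frac1y=[xy]\frac{1}{xy}$, followed by Lemma \ref{eliminatec}(i),(ii) to rewrite the brackets $[c,b+c]$, $[b+c,b]$, $[c,b]$ as $[ab]$ with $c=-a-b$ after the foot. Supplying that computation (or an equivalent one) is what your proposal still needs; without it the statement of the theorem's right-hand side is unestablished.
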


\begin{remark}
Comparison with Kurlin's Compressed Hexagon \label{SignConvention}
\end{remark} This equation is equivalent to the `compressed'
equation obtained by Kurlin \cite{Kurlin}. However, if the equation
(\ref{hexplus}) is summarized as $L(\lambda) = R$, the equation
obtained by Kurlin is $L(\lambda) = -R$.  The sign is due to the
fact that Kurlin writes the hexagons (\ref{hex}) and (\ref{hexmin})
with the associators on the RHS appearing in the opposite order to
ours.  We have followed the convention used in \cite{Drinfel'd} and
\cite{DBN2}.

The first step in proving Theorem \ref{thmhexplus} is:

\begin{proposition}
Modulo 1FF, and under the assumptions (\ref{unitary}),
(\ref{nondegeneracy}) and (\ref{grouplike}) on $\Phi$, and the
assumption (\ref{rmatrix}) on $R$, the positive hexagon (\ref{hex})
becomes:
\begin{equation}
e^{b+c}\ -\ e^b \cdot e^c\ =\ [a,b]\lbrace \ \lambda(a,b)\ e^{b+c}\
+\ \lambda(b,c)\ e^c\ +\ \lambda(a,c)\ \rbrace \label{hex1FF}
\end{equation}
\end{proposition}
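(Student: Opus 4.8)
The plan is to substitute the normal forms of $R$ and $\Phi$ into the positive hexagon (\ref{hex}) and then simplify both sides modulo 1FF. The left-hand side is immediate: $\Delta_1$ is an algebra homomorphism and $\Delta_1(a)=b+c$ (as recalled above), so $(\Delta 1)R=(\Delta 1)\exp(a)=\exp(b+c)=e^{b+c}$.

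For the right-hand side I would use the form $\Phi(a,b)=1+[ab]\lambda(a,b)$ with $\lambda$ a symmetric commuting power series — this is (\ref{Phi1FF}), obtained from the symmetry, non-degeneracy and group-like hypotheses via Proposition \ref{unitaryPhi} and Lemma \ref{lemmaformofw} — together with the ``linearized'' inverse $\Phi^{-1}=1-[ab]\lambda(a,b)$ and the factors $R^{23}=\exp(b)$, $R^{13}=\exp(c)$. The two permuted associators are read off from the action of the relevant permutations on the generators: $132$ fixes $b$ and interchanges $a$ and $c$, while $312$ cyclically sends $a\mapsto c\mapsto b\mapsto a$. Invoking Lemma \ref{eliminatec}(i), every bracket $[xy]$ that arises equals $\pm[ab]$, and using the symmetry of $\lambda$ one finds $(\Phi^{-1})^{132}=1+[ab]\lambda(b,c)$ and $\Phi^{312}=1+[ab]\lambda(a,c)$. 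Hence the right-hand side of (\ref{hex}) is the product
\[
\bigl(1+[ab]\lambda(a,b)\bigr)\, e^b\, \bigl(1+[ab]\lambda(b,c)\bigr)\, e^c\, \bigl(1+[ab]\lambda(a,c)\bigr).
\]

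The heart of the computation is the 1FF ``collapse'' identity: if $z$ has degree at least $1$, then $z\,[ab]=z(ab-ba)=0$ modulo 1FF, because in a product of three or more chords every chord after the first may be permuted freely. This has three consequences. When the displayed product is expanded, every term containing two or more bracket factors vanishes (the second bracket is always preceded by something of positive degree), so only terms with at most one $[ab]$ survive; moreover $e^b\,[ab]=[ab]$ and $e^b e^c\,[ab]=[ab]$; and finally any expression premultiplied by $[ab]$ is commutative in the remaining chords, so $[ab]\,e^b e^c=[ab]\,e^{b+c}$. Collecting the surviving terms gives
\[
\text{RHS} \;=\; e^b e^c+[ab]\bigl(\lambda(a,b)\,e^{b+c}+\lambda(b,c)\,e^c+\lambda(a,c)\bigr),
\]
and equating this with $e^{b+c}$ and rearranging yields (\ref{hex1FF}).

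The only genuine obstacle is keeping the 1FF bookkeeping correct: checking that every term quadratic or higher in $[ab]$ really dies, and tracking the signs coming from $[ab]=[bc]=[ca]$ and the symmetry of $\lambda$ when the permutations $132$ and $312$ are applied to $\Phi^{-1}$ and $\Phi$. With the collapse identity in hand the rest is routine power-series manipulation.
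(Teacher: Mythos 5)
Your proposal is correct and follows essentially the same route as the paper: substitute $R=e^a$, $\Phi=1+[ab]\lambda(a,b)$ and its linearized inverse, compute $(\Phi^{-1})^{132}$ and $\Phi^{312}$ via the strand permutations and $[ab]=[bc]=[ca]$, then expand keeping only terms linear in $[ab]$ and use the 1FF collapse (including $[ab]e^be^c=[ab]e^{b+c}$). Your write-up merely makes explicit two steps the paper leaves implicit, namely $(\Delta 1)R=e^{b+c}$ and the identities $e^b[ab]=[ab]$, $e^be^c[ab]=[ab]$.
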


\begin{proof}
Plugging the expression (\ref{rmatrix}) for $R$ and (\ref{Phi1FF})
for $\Phi$ into the positive hexagon (\ref{hex}) we get:

\begin{align}
e^{b+c}\ = & \Bigl[1+[a,b]\lambda(a,b)\Bigr]\ \cdot\ e^b\ \cdot\
\Bigl[(1+[a,b]\ \lambda(a,b))^{-1}\Bigr]^{132} \\
& \quad \quad \quad \cdot\ e^c\ \cdot\ \Bigl[1+[a,b]\
\lambda(a,b)\Bigr]^{312} \notag
\end{align}

Now note that if the strands in a chord diagram are permuted
according to the transformation $(123)\ \rightarrow\ (132)$ the
chords $a,b,c$ transform as follows:
\begin{align*}
a\ \rightarrow\ c\\
b\ \rightarrow\ b\\
c\ \rightarrow\ a
\end{align*}

Applying this to (\ref{Phiinverse}) and looking modulo 1FF, we get:

\begin{equation*}
(\Phi^{-1})^{132} = 1 + [a,b]\ \lambda(b,c)
\end{equation*}

Similarly, under the strand permutation $(123)\ \rightarrow\ (312)$
the chords transform as:
\begin{align*}
a\ \rightarrow\ c\\
b\ \rightarrow\ a\\
c\ \rightarrow\ b
\end{align*}

Therefore,

\begin{equation*}
\Phi^{312} = 1 + [a,b]\ \lambda(c,a)
\end{equation*}

Accordingly, the hexagon becomes:
\begin{equation*}
e^{b+c}\ =\ (1+[a,b]\lambda(a,b))\cdot e^b \cdot
(1+[a,b]\lambda(b,c)) \cdot e^c\ (1+[a,b]\lambda(c,a))
\end{equation*}

We now multiply out on the RHS.  By the frozen feet property, we
need keep only the terms at most linear in [a,b].  Using also that
$\lambda$ is symmetric, we get

\begin{equation*}
e^{b+c}\ -\ e^b \cdot e^c\ =\ [a,b]\lbrace \ \lambda(a,b)\ e^be^c\
+\ \lambda(b,c)\ e^c\ +\ \lambda(a,c)\ \rbrace
\end{equation*}

The term $e^b e^c$ in the RHS becomes $e^{b+c}$, because it is
pre-multiplied by $[ab]$ and hence the $b$ and $c$ commute.
\end{proof}

We now want to find a better form for the expression $e^{b+c} - e^b
e^c$.  First, though, we make a comment concerning power series. In
light of the importance of the first `frozen foot' in any product,
we will find it useful to split power series in one or more
variables (such as the exponential function) into summands, each
with a different `foot'.  These feet do not commute with subsequent
variables, but the variables in the series which follow do commute
among each other. Thus for instance $\exp(x+y)$ will become

\begin{equation*}
e^{(x+y)} = 1 + x\cdot \frac{e^{(x+y)}-1}{x+y} + y\cdot
\frac{e^{(x+y)}-1}{x+y}
\end{equation*}

This notation will be very convenient, but it must be remembered
that, in general, the $x+y$ (or other variables) in the denominator
should only be treated as dividing (in this example) $(e^{(x+y)}
-1)$ and its summands, but not the feet $x$ and $y$. More generally,
in a term premultiplied by $a, b$ or $c$, a subsequent denominator
should only be treated as dividing factors that follow the foot, not
the foot itself. Put more technically, the feet act as a set of
generators for a module on which the commutative ring
$\Q[[a,a^{-1},b, b^{-1},c,c^{-1}]]$ acts by right multiplication. By
abuse of notation, we use the same letters to denote corrresponding
module and ring generators.

Further to this point, we note that the second equality in the
following would be incorrect:

\begin{equation*}
[xy] = xy - y \frac{xy}{y} = xy - \frac{y}{y} xy = xy - xy = 0
\end{equation*}

Instead, we have the rather trivial but still very useful lemma:

\begin{lemma}
For x, y any two different elements of \{a, b, c\},
\begin{equation}
x \frac{1}{x} - y \frac{1}{y} = [xy] \frac{1}{xy}
\end{equation}
\label{xxminusyy}
\end{lemma}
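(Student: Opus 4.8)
The plan is to verify the identity directly in the module-over-commutative-ring picture set up just above the statement: the feet $a,b,c$ are the module generators, and the commutative ring $\Q[[a^{\pm 1},b^{\pm 1},c^{\pm 1}]]$ acts on the right, with the rule that a denominator divides only the factors following the foot, never the foot itself. So the first step is simply to read both sides in that normal form. On the left, $x\frac{1}{x}$ and $y\frac{1}{y}$ are already normal-form module elements — the foot $x$ (resp.\ $y$) acted on by the ring element $x^{-1}$ (resp.\ $y^{-1}$) — and, crucially, these are genuinely nonzero; nothing cancels.

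Next I would expand the right-hand side. Since $x$ and $y$ are single chords, $[xy]=xy-yx$, and in the module picture $xy$ is the foot $x$ acted on by the ring element $y$, while $yx$ is the foot $y$ acted on by the ring element $x$. Hence
$[xy]\frac{1}{xy}=(xy)\frac{1}{xy}-(yx)\frac{1}{xy}$, and applying the right action of $\frac{1}{xy}=x^{-1}y^{-1}$ (a legitimate, commuting element of $\Q[[a^{\pm 1},b^{\pm 1},c^{\pm 1}]]$) to each summand, using only associativity of the module action and commutativity of the ring, gives $x\,(y\cdot x^{-1}y^{-1})-y\,(x\cdot x^{-1}y^{-1})=x\,x^{-1}-y\,y^{-1}=x\frac{1}{x}-y\frac{1}{y}$. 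That is exactly the claimed identity, and the computation is symbol-for-symbol the same for any ordered pair of distinct elements of $\{a,b,c\}$ (swapping $x$ and $y$ just negates both sides consistently).

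There is no real obstacle here — this is the ``rather trivial'' lemma advertised — but the single point that must be handled with care is precisely the one that makes the naive manipulation $[xy]=xy-\frac{y}{y}xy=0$ illegal: the symbol $\frac{1}{x}$ annihilates an $x$ only when that $x$ occupies the ring slot (to the right of a foot), never when $x$ is itself the foot. In $(xy)\frac{1}{xy}$ the surviving foot is $x$ and the ring part collapses to $x^{-1}$, with nothing to cancel it against, so the term does not vanish. I would also note in passing that both sides lie in the completed module — the negative powers are purely formal and every summand is a well-defined module element — so no convergence question arises, and that Lemma~\ref{eliminatec} is not invoked, since $x$ and $y$ are carried as independent feet throughout.
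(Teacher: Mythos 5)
Your proof is correct, and it is exactly the ``simple calculation'' the paper invokes without spelling out: expand $[xy]=xy-yx$, read each term as a foot acted on by the commutative ring on the right, and cancel only within the ring slot. No discrepancy with the paper's approach, and your remark on why the foot cannot be cancelled is precisely the convention the paper sets up before the lemma.
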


\begin{proof}
Proof is a simple calculation. \end{proof}

Using this lemma, we can derive the following proposition.

\begin{proposition}
\label{postriangle} Modulo 1FF, we have
\begin{equation}
e^{b+c}-e^b\cdot
e^c=[a,b]\cdot\bigl(\frac{e^{-a}}{ab}+\frac{e^c}{bc}+\frac{1}{ac}\bigr)
\end{equation}
\end{proposition}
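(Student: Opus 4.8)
The plan is to verify the identity by starting from its right-hand side and collapsing it down to $e^{b+c}-e^be^c$. (One could instead run the argument the other way, foot-splitting $e^{b+c}-e^be^c$ to obtain the "better form" the surrounding text is looking for, but the right-to-left direction is cleaner: there the whole expression is manifestly sitting behind a commutator, which is exactly what legitimizes the key substitution below.)

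First I would attach each of the three power-series factors to the commutator whose two legs are the variables in its denominator. Using $[ab]=[bc]=[ca]$ from Lemma~\ref{eliminatec}(i) and commutativity of $\Q[[a^{\pm},b^{\pm},c^{\pm}]]$,
\[
[a,b]\Bigl(\tfrac{e^{-a}}{ab}+\tfrac{e^{c}}{bc}+\tfrac{1}{ac}\Bigr)
=[ab]\tfrac{1}{ab}\,e^{-a}+[bc]\tfrac{1}{bc}\,e^{c}+[ca]\tfrac{1}{ca}.
\]
Then Lemma~\ref{xxminusyy} replaces each $[xy]\tfrac1{xy}$ by $x\tfrac1x-y\tfrac1y$, and collecting terms by their foot turns the right-hand side into
\[
a\tfrac1a\,(e^{-a}-1)\;+\;b\tfrac1b\,(e^{c}-e^{-a})\;+\;c\tfrac1c\,(1-e^{c}).
\]

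Next I would simplify. The first and last summands are honest elements of $\AFF$, namely $e^{-a}-1$ and $1-e^{c}$, because in $x\tfrac1x\cdot p$ the denominator only has to divide the post-foot series $p$. The middle summand carries the content. Since the whole expression equals something premultiplied by the commutator $[bc]$, the cyclic form of Lemma~\ref{eliminatec}(ii) lets me set $a=-(b+c)$ throughout, so $e^{-a}=e^{b+c}$; and since that summand is premultiplied by the single foot $b$, everything after the $b$ commutes, so there $e^{b+c}=e^be^c$ and $e^{c}-e^{b+c}=-(e^{b}-1)e^{c}$ is divisible by $b$. Hence $b\tfrac1b(e^{c}-e^{-a})=-(e^{b}-1)e^{c}$, and the first summand becomes $e^{b+c}-1$. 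Adding up, $(e^{b+c}-1)-(e^{b}-1)e^{c}+(1-e^{c})=e^{b+c}-e^be^c$, which is the claim.

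I expect the only genuine difficulty to be bookkeeping: one must be scrupulous about which symbols are "feet" (non-commuting module generators) and which are ordinary invertible ring elements, and in particular must check that the one summand that is not individually an element of $\AFF$ — the Laurent expression $b\tfrac1b(e^{c}-e^{-a})$, whose $b^{-1}$ disappears only after the substitution — really is repaired, the substitution $a=-(b+c)$ being allowed precisely because the expression is premultiplied by $[bc]$. If one prefers the left-to-right derivation matching "find a better form for $e^{b+c}-e^be^c$", one foot-splits $e^{b+c}$ and $e^be^c$ via $e^{x+y}=1+x\tfrac{e^{x+y}-1}{x+y}+y\tfrac{e^{x+y}-1}{x+y}$, subtracts, breaks each series into its $\tfrac{1}{b+c}$, $\tfrac1b$, $\tfrac1c$ pieces, and reassembles them into the $[xy]\tfrac1{xy}$'s of Lemma~\ref{xxminusyy}; the symmetric obstacle there is that one must know in advance that the difference is premultiplied by a commutator before trading $b+c$ for $-a$.
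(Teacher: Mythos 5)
Your right-to-left strategy and the opening moves are fine: rewriting the three terms as $[ab]\tfrac{1}{ab}e^{-a}+[bc]\tfrac{1}{bc}e^{c}+[ca]\tfrac{1}{ca}$, applying Lemma \ref{xxminusyy} in reverse, and collecting by foot to get $a\tfrac1a(e^{-a}-1)+b\tfrac1b(e^{c}-e^{-a})+c\tfrac1c(1-e^{c})$ is essentially the paper's computation run backwards (the paper foot-splits $e^{b+c}-e^be^c$ with feet $b$, $c$, $b+c$, assembles the differences $x\tfrac1x-y\tfrac1y$ into commutators, and only then substitutes $b+c=-a$). The gap is your substitution step. Lemma \ref{eliminatec}(ii) licenses setting $a=-(b+c)$ only in an expression that is \emph{literally} premultiplied by a commutator; after your decomposition the summands are (single foot)$\cdot$(series), and for those the substitution is simply false: the first summand is the honest element $e^{-a}-1$, and $e^{-a}-1\neq e^{b+c}-1$ in $\AFF$, since their difference is $e^{-a}(e^{a+b+c}-1)\neq 0$ ($a+b+c$ is central, not zero). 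The fact that the total \emph{equals} something of the form $[bc]\cdot F$ does not transfer the license, because ``replace $a$ by $-(b+c)$ in the trailing series'' is an operation on representatives, not on elements of $\AFF$: applied to two representatives of the same element (e.g. $b(a+c)m$ written as $\hat{b}$-footed versus, via 4T, as $\hat{a}$- and $\hat{c}$-footed) it gives different answers, so it is not well defined on the quotient.

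Your final arithmetic is nevertheless correct, but only because the two errors you introduce cancel, and proving that cancellation is exactly the missing content: the discrepancy is $(e^{b+c}-e^{-a})-b\tfrac1b(e^{b+c}-e^{-a})$, and writing $e^{b+c}-e^{-a}=(a+b+c)k$ one needs $a\,k+c\,k=b\,\tfrac{(a+c)k}{b}$, i.e. the localized form of the 4T/centrality relation $[a+c,b]\,k=0$ --- an appeal your proposal never makes. Two clean repairs: (i) substitute while each term is still under its commutator, replacing $\tfrac{e^{-a}}{ab}$ by $-\tfrac{e^{b+c}}{(b+c)b}$ and $\tfrac1{ac}$ by $-\tfrac1{(b+c)c}$, and then decompose with feet $b$, $c$, $b+c$ rather than $a,b,c$; this is precisely the paper's proof read in reverse. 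Or (ii) keep $e^{-a}$ intact, reduce the claim to $b\tfrac1b(e^{c}-e^{-a})=e^{b+c}-e^{b}e^{c}-e^{-a}+e^{c}$, and verify it by the 4T identity just stated. Your closing remark about the left-to-right route is also off: there no advance knowledge is needed, since $b+c$ is traded for $-a$ only after Lemma \ref{xxminusyy} has put every term explicitly behind a commutator.
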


\begin{proof}
\begin{align*}
e^{b+c}-e^b e^c & = \bigl( 1 + (b+c) \frac{e^{b+c}-1}{b+c} \bigr) -
\bigl( 1 + b \frac{e^b-1}{b} \bigr) \bigl( 1 + c \frac{e^c-1}{c}
\bigr) \\
& = (b+c) \frac{e^{b+c}-1}{b+c} - c \frac{e^c-1}{c} - b
\frac{e^b-1}{b} e^c \\
& = \bigl( c \frac{1}{c} - (b+c) \frac{1}{b+c} \bigr) + \bigl( (b+c)
\frac{1}{b+c} - b \frac{1}{b} \bigr) e^{b+c} - \bigl( c \frac{1}{c}
- b \frac{1}{b} \bigr) e^c \\
& = [c,b+c] \frac{1}{c(b+c)} + [b+c,b] \frac{1}{(b+c)b} e^{b+c} -
[c,b] \frac{1}{cb} e^c \\
& = [ab] \bigl(\frac{e^{-a}}{ab}+\frac{e^c}{bc}+\frac{1}{ac}\bigr)
\end{align*}
as required (note that in the last line, we have in particular used
Lemma \ref{eliminatec}(i) and (ii)).
\end{proof}

Putting together the last two propositions, dropping the $[a,b]$ on
both sides and multiplying by $e^a$, we get Theorem
\ref{thmhexplus}.

\begin{remark}
Singular Solution
\end{remark}
The simplified equation in Theorem \ref{thmhexplus} has the singular
solution $\lambda (x,y) = \frac{1}{xy}$.  We note in passing that
this is the factor which appears pre-multiplied by $[x,y]$ in the
difference $x \frac{1}{x} - y \frac{1}{y}$ (see Lemma
\ref{xxminusyy}). Non-singular solutions are discussed below.

\subsubsection{The Negative Hexagon}

It will be recalled that the associator is in fact required to solve
two hexagon equations, the positive and negative hexagons. The
positive hexagon was set out above and simplified. As mentioned
earlier, it can be shown (see \cite{Drinfel'd} at equation (2.10)
and \cite{DBN2} Prop. 3.7) that any solution of the positive hexagon
automatically satisfies the negative hexagon, provided it is
symmetric - in our notation, this means $\Phi \cdot \Phi^{321} = 1$,
or $\lambda(a,b)=\lambda(b,a)$ (which we have assumed of our $\Phi$
and $\lambda$).  In principle therefore, we need not concern
ourselves further with the negative hexagon.  However, it will be
useful at various points in the remainder of this paper to have a
statement of the negative hexagon as simplified modulo 1FF, and so
we set it out here.  The original version is:

\begin{equation}
\label{HexMinus}
(\Delta 1)R^{-1}\ =\ \Phi\ (R^{-1})^{23}\ (\Phi^{-1})^{132}\ (R^{-1})^{13}\ \Phi^{312}\\
\end{equation}
In terms of $a, b, c$ this means:
\begin{align}
e^{-(b+c)}\ =\ \Bigl[1+[a,b]\lambda(a,b)\Bigr]\ \cdot\ e^{-b}\
\cdot\ & \Bigl[(1+[a,b]\ \lambda(a,b))^{-1}\Bigr]^{132}\ \cdot\
e^{-c}\ \\
& \quad \cdot\ \Bigl[1+[a,b]\ \notag \lambda(a,b)\Bigr]^{312}
\end{align}

This in turn simplifies to:

\begin{equation}
\label{hexminus}
 \lambda(a,b)+e^{b}\ \lambda(b,c)+e^{-a}\
\lambda(a,c)\ =\ \frac{1}{ab}+\frac{e^{b}}{bc}+\frac{e^{-a}}{ac}
\end{equation}
by the same method as the positive hexagon.

\subsubsection{Solution to the Hexagons}

Subject to the same constraints (symmetry, non-degeneracy and
group-like form for $\Phi$, and exponential form for $R$), Kurlin
\cite{Kurlin} considered the image of the hexagon and pentagon
equations (in a slightly different form) under the map $Log :
\mathcal{GA} \longrightarrow \mathcal{L}$, where $\mathcal{GA}$
refers to the group-like elements of $\cA$ and $Log$ is given by the
usual power series. Kurlin considered the resulting equations modulo
the Lie ideal $[[\cL,\cL],[\cL,\cL]]$ (calling these equations the
`compressed' hexagons and pentagon). Kurlin found that the
compressed hexagons were equivalent to (\ref{hexplus}) and
(\ref{hexminus}) (up to a change of sign due to different
conventions -- see Remark \ref{SignConvention}), and proceeded to
derive a full set of solutions.

Theorem \ref{equivalence} below shows that solving the compressed
equations (i.e. $Log$ of the hexagons and pentagon, mod
$[[\cL,\cL],[\cL,\cL]]$) is equivalent to solving the hexagons and
pentagon modulo $1FF$. Thus Theorem \ref{thmhexplus} and the
corresponding derivation of (\ref{hexminus}) provide an alternative
proof that the compressed hexagons are equivalent to (\ref{hexplus})
and (\ref{hexminus}).

I note, out of interest, one of the solutions found by Kurlin:

\begin{equation}
\lambda(a,b)\ =\ \Bigl(\frac{\sinh (a+b)}{(a+b)}\ \cdot\
\frac{\omega}{\sinh \omega}\ -1\Bigr)\ /a\cdot b
\end{equation}

where $\omega\ =\ (a^2+ab+b^2)^{1/2}$.

Before stating and proving Theorem \ref{equivalence}, we need a few
lemmas.

\begin{lemma}
Let $\alpha + A$ be a Lie series in $\hat{\cL}$, with $\alpha$ the
linear part (i.e. of degree one).  Then, modulo 1FF:

\begin{equation*}
e^{\alpha + A} = e^{\alpha} + A \frac{e^{\alpha}-1}{\alpha}
\end{equation*}
\label{gplike1FF}
\end{lemma}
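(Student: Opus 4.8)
The plan is to prove the claimed formula $e^{\alpha+A} = e^\alpha + A\frac{e^\alpha-1}{\alpha}$ modulo 1FF by working degree-by-degree in the power series expansion of the exponential, exploiting the fact that in the 1FF quotient every monomial of degree $\geq 2$ that contains a genuine commutator collapses. The key structural observation is that $\alpha$ has degree one (it is a $\Q$-linear combination of $a$, $b$, $c$) while every term of $A$ has degree $\geq 2$ and, being a Lie series with no linear part, is a $\Q$-linear combination of long commutators $[w]$; by Proposition \ref{lieseries} each such commutator can be written in the form $[w'(a,b)ab]$, i.e. as something pre-multiplied by the bracket $[ab]$. So $A$ itself is ``pre-bracketed''.

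First I would expand $e^{\alpha+A} = \sum_{n\geq 0}\frac{1}{n!}(\alpha+A)^n$ and multiply out $(\alpha+A)^n$ as a sum of words in the two ``letters'' $\alpha$ and $A$. Any word containing two or more occurrences of $A$ has two factors each of which is pre-multiplied by a commutator (after the first such factor, we are in degree $\geq 1$, so the second commutator is killed by 1FF); hence all such words vanish. Likewise, in a word with exactly one occurrence of $A$, that $A$ contributes a commutator factor, so every letter $\alpha$ appearing \emph{after} the $A$ can be freely commuted — but more importantly, since $A$ is pre-bracketed and has positive degree, once $A$ has occurred we may treat the rest of the word as living in a commutative algebra. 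What remains to check is: (a) the sum of all pure-$\alpha$ words, namely $\sum_n \frac{1}{n!}\alpha^n = e^\alpha$; and (b) the sum of all words with exactly one $A$.

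For part (b), fix the word $\alpha^i A \alpha^j$ with $i+j = n-1$; it appears in $(\alpha+A)^n$ with coefficient $1$, so its total contribution to $e^{\alpha+A}$ is $\sum_{i,j\geq 0}\frac{1}{(i+j+1)!}\alpha^i A \alpha^j$. Because $A$ is pre-bracketed, I can move all the trailing $\alpha^j$ to sit next to the $\alpha^i$ on the other side of $A$ up to nothing — wait, that is not quite legitimate since $A$ does not commute with $\alpha$ on its \emph{left}; rather, the trick is that only the part of each term \emph{after the leading foot} is commutative. The cleanest route is: $A$ itself equals (leading foot)$\cdot$(commutative tail), and $\alpha^i$ on the left of $A$ cannot be absorbed, but $\alpha^j$ on the right can be pulled through $A$'s tail and recombined, giving $\alpha^i A \alpha^j \equiv \alpha^i \alpha^j A \cdot(\text{correction?})$ — this needs care. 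Instead I would simply write $A = \sum_k A_k$ with $A_k$ of homogeneous degree $k\geq 2$, and observe $\alpha^i A \alpha^j$ has the foot of $\alpha^i$ (if $i\geq 1$) or of $A$ (if $i=0$) as its frozen foot; in either case everything after the foot commutes, so $\alpha^i A \alpha^j = A \alpha^{i+j}$ when $i=0$ and $= \alpha^i A \alpha^j$ with $\alpha$'s and the tail of $A$ all commuting when $i\geq 1$, which lets me collect $\sum_{i+j=n-1}\alpha^i A \alpha^j = n\,\alpha^{n-1}A$ as a commutative computation (there are $n$ choices of $(i,j)$). Hence the one-$A$ contribution is $\sum_{n\geq 1}\frac{n}{n!}\alpha^{n-1}A = A\sum_{n\geq 1}\frac{\alpha^{n-1}}{(n-1)!}\cdot\frac{1}{n}\cdot n$ — more precisely $\sum_{n\geq 1}\frac{1}{n!}\,n\,\alpha^{n-1}A = \bigl(\sum_{m\geq 0}\frac{\alpha^m}{(m+1)!}\bigr)A = \frac{e^\alpha-1}{\alpha}\cdot A = A\cdot\frac{e^\alpha-1}{\alpha}$, where the last equality again uses that $\frac{e^\alpha-1}{\alpha}$ is a commutative power series in $\alpha$ acting on the right of the pre-bracketed element $A$. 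Adding (a) and (b) gives the claim.

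The main obstacle I anticipate is being scrupulous about the ``foot'' bookkeeping: the denominator $\alpha$ in $\frac{e^\alpha-1}{\alpha}$ must be interpreted as acting only on the commutative tail (as emphasized in the paper's discussion preceding Lemma \ref{xxminusyy}), and the step where I claim $\sum_{i+j=n-1}\alpha^i A\alpha^j \equiv n\,\alpha^{n-1}A \pmod{1FF}$ requires knowing that once the frozen foot is fixed, reordering the remaining $n-1$ letters $\alpha$ (and the tail of $A$) costs nothing — which is exactly the 1FF relation $uxy = uyx$ applied repeatedly, valid since after the foot we are in degree $\geq 1$. I would state this reordering as a small sublemma (or just cite Lemma \ref{eliminatec}-style reasoning) to keep the argument honest, and otherwise the proof is a routine resummation.
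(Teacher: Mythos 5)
Your overall plan (expand $e^{\alpha+A}$, discard every word containing two or more occurrences of $A$, resum what is left) is exactly the paper's, and your treatment of the pure-$\alpha$ words and of the multi-$A$ words is fine. The gap is in the single-$A$ words. Modulo 1FF, each homogeneous component of $A$ is a Lie element of degree at least two, hence a linear combination of brackets $[x,y]$; therefore $\alpha^i A\alpha^j=0$ whenever $i\geq 1$, because the prefix $\alpha^i$ has degree $\geq 1$ and $u[x,y]=0$. So in $(\alpha+A)^n$ the only surviving single-$A$ word is the one in which $A$ itself is the frozen foot, namely $A\alpha^{n-1}$, and it carries coefficient $\frac{1}{n!}$. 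This is precisely the paper's one-line computation $(\alpha+A)^n\equiv\alpha^n+A\alpha^{n-1}$ mod 1FF, which resums to $e^\alpha+A\,\frac{e^\alpha-1}{\alpha}$. Your claim that $\sum_{i+j=n-1}\alpha^i A\alpha^j\equiv n\,\alpha^{n-1}A$ is false: the words with $i\geq 1$ are not reordered into $\alpha^{n-1}A$, they vanish outright, and indeed $\alpha^{n-1}A$ is itself $0$ mod 1FF for $n\geq 2$, being a bracket preceded by a positive-degree factor. The order is the whole point of the lemma: $A$ must remain the leftmost factor, and a power series in $\alpha$ may only act on it by right multiplication; left multiplication by anything of positive degree annihilates it.

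Your resummation is also internally inconsistent in a way that hides this error: $\sum_{n\geq 1}\frac{n}{n!}\alpha^{n-1}=e^\alpha$, not $\sum_{m\geq 0}\frac{\alpha^m}{(m+1)!}=\frac{e^\alpha-1}{\alpha}$, so if your count of $n$ surviving words per degree were correct the conclusion would be $e^{\alpha+A}=e^\alpha+A\,e^\alpha$, which contradicts the lemma. With the correct count (a single surviving word per degree, with $A$ in front and coefficient $\frac{1}{n!}$) one gets $\sum_{n\geq 1}\frac{1}{n!}A\alpha^{n-1}=A\,\frac{e^\alpha-1}{\alpha}$ as stated. In short, you land on the right formula only because an incorrect combinatorial count was cancelled by an incorrect evaluation of the series; the proof needs the vanishing argument $\alpha^i A\alpha^j\equiv 0$ for $i\geq 1$ (via Proposition \ref{lieseries}-type reasoning that $A$ is a sum of brackets), after which the lemma follows immediately.
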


\begin{proof}
We consider the degree $n$ part of $e^{\alpha + A}$:
\begin{equation*}
\frac{1}{n!}(\alpha +A)^n = \frac{1}{n!}(\alpha^n + A\cdot
\alpha^{n-1}) \quad mod\ 1FF
\end{equation*}

Hence
\begin{align*}
e^{\alpha + A} &= 1 + (\alpha + A) +  \dotsb + 1/{n!} (\alpha^n + A\cdot \alpha^{n-1}) + \dotso \\
&= e^{\alpha} + A \bigl\{ 1 + \frac{1}{2} \alpha + \dotsb + \frac{1}{n!} a^{n-1} +
\dotso \\
&= e^{\alpha} + A\ \frac{e^{\alpha}-1}{\alpha}
\end{align*}
as required.
\end{proof}

\begin{proposition}
\label{EquivFFCC} Let $x+X,\ y+Y$ be Lie series in $\hat{\cL}$ with
$x$ and $y$ the linear parts.  Then the equality of group-like
elements

\begin{equation*}
e^{x+X} = e^{y+Y}
\end{equation*}
holds modulo 1FF if and only if the equality of their logarithmic
images

\begin{equation*}
x+X = y+Y
\end{equation*}
holds modulo $[[\cL,\cL],[\cL,\cL]]$. \label{prelimequiv}
\end{proposition}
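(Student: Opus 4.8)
The plan is to reduce both directions to Proposition~\ref{lieseries}(iv), which says that for Lie series in $a,b$ beginning in degree $\geq 2$ the relations ``modulo $1FF$'' and ``modulo $[[\cL,\cL],[\cL,\cL]]$'' coincide; the translation between group-like elements and their logarithms is supplied entirely by Lemma~\ref{gplike1FF}. Throughout, write $X$ and $Y$ for the degree-$\geq 2$ parts of the two Lie series (so, by Proposition~\ref{lieseries}(iii), these are themselves Lie series in $a,b$ alone), and recall that $[[\cL,\cL],[\cL,\cL]]$ is concentrated in degrees $\geq 4$. By Lemma~\ref{gplike1FF}, modulo $1FF$,
\[
e^{x+X} = e^{x} + X\,\frac{e^{x}-1}{x}, \qquad e^{y+Y} = e^{y} + Y\,\frac{e^{y}-1}{y}.
\]

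For the ``if'' direction, assume $x+X = y+Y$ modulo $[[\cL,\cL],[\cL,\cL]]$. Comparing degree-one parts, where the quotient is trivial, gives $x=y$ outright, hence $X-Y \in [[\cL,\cL],[\cL,\cL]]$; by Proposition~\ref{lieseries}(iv) this forces $X = Y$ modulo $1FF$. Substituting into the two displayed formulas and using $x=y$ yields $e^{x+X} = e^{y+Y}$ modulo $1FF$.

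For the ``only if'' direction, assume $e^{x+X} = e^{y+Y}$ modulo $1FF$. The degree-one part of the right-hand side of Lemma~\ref{gplike1FF} is the linear part, so $x=y$; cancelling $e^{x}$ then leaves $(X-Y)\,\frac{e^{x}-1}{x} = 0$ modulo $1FF$. Now I would normalize: by Proposition~\ref{lieseries}(iii) and Lemma~\ref{lemmaformofw}, $X-Y \equiv [ab]\,\mu(a,b)$ modulo $1FF$ for a unique commutative power series $\mu$, so $[ab]\,\mu(a,b)\,\frac{e^{x}-1}{x} \equiv 0$ modulo $1FF$. Since this expression sits behind the foot $[ab]$, Lemma~\ref{eliminatec}(ii) lets us replace $c$ by $-a-b$, after which $\frac{e^{x}-1}{x}$ is an honest power series in $a,b$ with constant term $1$, hence a unit of $\Q[[a,b]]$; and the uniqueness in Proposition~\ref{lieseries}(iii), transported through Lemma~\ref{lemmaformofw}, says $[ab]\,\nu(a,b)\equiv 0$ modulo $1FF$ only when $\nu=0$. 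Therefore $\mu = 0$, i.e. $X = Y$ modulo $1FF$, and Proposition~\ref{lieseries}(iv) upgrades this to $x+X = y+Y$ modulo $[[\cL,\cL],[\cL,\cL]]$.

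The delicate point is the cancellation of $\frac{e^{x}-1}{x}$ in the last paragraph: as the remark preceding Lemma~\ref{xxminusyy} emphasizes, one may not cancel a factor that sits in front of a foot, and $X-Y$ is a priori an unstructured Lie series, so the cancellation is not literally legitimate as written. The resolution is precisely the normalization $X-Y \equiv [ab]\,\mu(a,b)$, which moves $\frac{e^{x}-1}{x}$ entirely to the right of the foot, where (after setting $c=-a-b$) it becomes a genuine unit in a commutative power-series ring. Everything else is degree bookkeeping together with the already-established Lemma~\ref{gplike1FF} and Proposition~\ref{lieseries}.
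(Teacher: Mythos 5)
Your proof is correct and follows essentially the same route as the paper: compare degree-one parts to get $x=y$, apply Lemma~\ref{gplike1FF} to linearize the exponentials modulo $1FF$, and use Proposition~\ref{lieseries}(iv) to pass between the $1FF$ and $[[\cL,\cL],[\cL,\cL]]$ quotients. The only divergence is your ``delicate point'': the worry is unfounded, because $\frac{e^x-1}{x}=\sum_{n\geq 0}x^n/(n+1)!$ is an honest element of $\hat{\cA}$ with constant term $1$, hence invertible in the quotient $\AFF$, so cancelling it by right-multiplication with its inverse is perfectly legitimate (and is exactly what the paper does); the caveat about denominators and feet concerns only formal fractions such as $\frac{1}{x}$, which are not elements of the algebra. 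Your normalization $X-Y\equiv [ab]\,\mu(a,b)$ followed by setting $c=-a-b$ behind the foot is therefore an unnecessary, though harmless and correct, detour.
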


\begin{proof}
We have
\begin{equation*}
e^{x+X} = e^{y+Y}
\end{equation*}
if and only if
\begin{equation*}
1+x+X + \text{h.o.} = 1 + y + Y + \text{h.o.}
\end{equation*}
(where `h.o.' means `higher order terms') and hence, by looking at
degree one terms, we must have $x=y$.

Then
\begin{gather*}
e^{x+X} = e^{x+Y} \quad mod\ 1FF \\
\iff \\
e^x + X \frac{e^x-1}{x} = e^x + Y \frac{e^x-1}{x} \quad mod\ 1FF \\
\iff \\
X = Y \quad mod\ 1FF \\
\iff \\
 X = Y \quad mod\ [[\cL,\cL],[\cL,\cL]]
\end{gather*}
as required.  Note that in going from the second to the third line,
we used the summation

\begin{equation*}
\frac{e^x -1}{x} = 1 + \frac{1}{2} x + \frac{1}{3!} x^2 + \dotso
\end{equation*}
which shows that $\frac{e^x -1}{x}$ is invertible, and in going from
the third to the fourth lines, we used Proposition \ref{lieseries}.
\end{proof}

\begin{theorem}
The hexagons and the pentagon hold modulo 1FF if and only if their
logarithmic images hold modulo $[[\cL,\cL],[\cL,\cL]]$.
\label{equivalence}
\end{theorem}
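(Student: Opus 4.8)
The plan is to observe that, under the standing hypotheses (\ref{rmatrix}) on $R$ and (\ref{grouplike}) on $\Phi$, each of the three equations --- the positive hexagon (\ref{hex}), the negative hexagon (\ref{hexmin}) and the pentagon (\ref{pentagon}) --- asserts the equality of two \emph{group-like} elements of $\hat{\cA}_3$ (for the hexagons) or $\hat{\cA}_4$ (for the pentagon), and then to read off the equivalence from Proposition \ref{prelimequiv}, applied once for each equation. First I would record the closure properties of group-like elements: $R=\exp(a)$, $R^{-1}=\exp(-a)$ and $\Phi=\exp(\phi)$ are group-like by hypothesis; products and inverses of group-like elements are group-like; and every strand permutation $D\mapsto D^{\sigma}$ and every strand-doubling $\Delta_i$ is a morphism of coalgebras (indeed of bialgebras), hence sends group-like elements to group-like elements. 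Hence both sides of each equation --- for instance $(\Delta 1)R$ and $\Phi\cdot R^{23}\cdot(\Phi^{-1})^{132}\cdot R^{13}\cdot\Phi^{312}$ for the positive hexagon, and $\Phi^{123}\cdot(1\Delta 1)\Phi\cdot\Phi^{234}$ and $(\Delta 11)\Phi\cdot(11\Delta)\Phi$ for the pentagon --- are group-like, so each may be written as $\exp(P)$ for a Lie series $P$ (the Baker--Campbell--Hausdorff formula applied to the group-like factors exhibits $P$ as a Lie series). These Lie series are precisely the ``logarithmic images'' of the equations referred to in the statement.

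With this in hand the theorem is a threefold application of Proposition \ref{prelimequiv}: writing the two sides of a given equation as $\exp(P)$ and $\exp(Q)$ and taking $x+X:=P$, $y+Y:=Q$, that proposition gives $\exp(P)=\exp(Q)$ modulo 1FF if and only if $P=Q$ modulo $\Lfour$. Since ``the hexagons and the pentagon hold modulo 1FF'' is exactly the conjunction of the three left-hand statements, and ``their logarithmic images hold modulo $\Lfour$'' the conjunction of the three right-hand ones, the stated equivalence follows. In combination with Theorem \ref{thmhexplus} and the parallel reduction of (\ref{hexmin}) to (\ref{hexminus}), this also re-proves that Kurlin's compressed hexagons are equivalent to (\ref{hexplus}) and (\ref{hexminus}).

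The step I expect to be the main obstacle is that Proposition \ref{prelimequiv}, although phrased for $\hat{\cL}$ in general, is proved via Proposition \ref{lieseries}, whose normal-form statements concern Lie series in the two generators $a,b$ on three strands; this suffices for the hexagons, but the pentagon genuinely lives on four strands. So the argument needs the four-strand analogue of Proposition \ref{lieseries}(iv): for $l\in\hat{\cL}_4$ a Lie series, $l\equiv 0$ modulo 1FF if and only if $l\equiv 0$ modulo $\Lfour$. One implication is immediate and strand-count-independent --- every monomial of $[[x,y],[z,w]]$ is a product of a degree $\geq 1$ term with a commutator of its remaining two factors, hence vanishes in $\AFF$ --- so the content is that the natural map $\hat{\cL}_4/\Lfour\to\AFF$ is injective. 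I would prove this by the Jacobi-identity bookkeeping used in Proposition \ref{lieseries} together with a normal-form count in $\AFF$ (where a monomial is determined by its first letter and the multiset of the remaining letters); alternatively one can close the pentagon case with no four-strand normal form by matching the explicit 1FF-reduction of the pentagon with Kurlin's computation of the compressed pentagon, both being the same equation in $\Q[[x,y]]$. The remaining verifications --- preservation of group-likeness under $\Delta_i$ and strand permutations, and the bookkeeping of which equation is which --- are routine.
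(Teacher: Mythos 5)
Your proposal takes essentially the same route as the paper, whose entire proof is the two-step observation that both sides of each equation are (products of) group-like elements, hence group-like, followed by an appeal to Proposition \ref{prelimequiv}. The four-strand subtlety you flag for the pentagon is genuine, but the paper's own two-line proof glosses over it as well, covering it only implicitly through the later remark generalizing Proposition \ref{lieseries} to words on $\{a,b,c,d,e,f\}$ and through Proposition \ref{L4generators}; your sketch of how to close that point goes beyond what the paper records, and is a reasonable way to do so.
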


\begin{proof}
The LHS and RHS of the hexagons and pentagon are (products of)
group-like elements, hence are themselves group-like elements. We
can now apply Proposition \ref{prelimequiv}.
\end{proof}

\begin{remark}
Campbell-Hausdorff-Baker Formula, Modulo $\FF$ or
$[[\cL,\cL],[\cL,\cL]]$
\end{remark}

Lemma \ref{gplike1FF} can be used to give a short, simple derivation
of Kurlin's formula for the Campbell-Hausdorff-Baker formula modulo
$[[\cL,\cL],[\cL,\cL]]$ (see \cite{Kurlin} at Prop. 2.8 and Prop.
2.12), which also holds modulo $\FF$. Indeed, we seek a power series
$C(x,y)$ in commutative variables $x,y$ such that

\begin{equation*}
\exp(b+c+[bc]C(b,c)) = \exp(c) \exp(b)
\end{equation*}

But from Lemma \ref{gplike1FF}, we have

\begin{equation*}
e^{(b+c+[bc]C(b,c))} = e^{(b+c)} + [bc]C(b,c)\ \frac{e^{(b+c)} -
1}{(b+c)}
\end{equation*}
(where technically we should perhaps have converted $[bc]C(b,c)$ to
the Lie series $[C(-b,-c)bc]$ using Proposition \ref{liemonomials}
before applying Lemma \ref{gplike1FF}, and then converted back to
$[bc]C(b,c)$).  But then

\begin{align*}
[bc]C(b,c)\ \frac{e^{(b+c)} - 1}{(b+c)} &=
e^c e^b - e^{(b+c)} \\
&= -[ac] \bigl(\frac{e^{-a}}{ac}+\frac{e^b}{bc}+\frac{1}{ab}\bigr)
\end{align*}
from Proposition \ref{postriangle} (after the exchange $b
\leftrightarrow c$). From this we readily derive:

\begin{equation*}
C(b,c) = \frac{e^b-1}{bc}\ \bigl( \frac{b+c}{e^{b+c}-1} -
\frac{b}{e^b-1} \bigr) \end{equation*}

This result is valid modulo $\FF$ and, pursuant to Proposition
\ref{EquivFFCC}, modulo $\Lfour$.

\subsubsection{Solving the Pentagon Modulo 1FF}

It so happens that the pentagon is automatically satisfied modulo
1FF, for any function $\Psi$ of the form $\Psi = [ab]\ \mu(a,b)$,
where $\mu$ is a commutative power series of two variables which is
symmetric in its arguments.  In particular, the 1FF associator
$\Phi$ satisfies the pentagon since it has the required symmetry
property (and this, independent of the particular form $\Phi$ must
take to solve the hexagons).  Since the proof already appears in
\cite{Kurlin} Proposition 5.10, in this section we merely present
notation, and state and prove results in the form that will be used
in the balance of this paper.  This material largely reproduces
results by Kurlin but the results and proofs are given here in a
somewhat more general and concise form.

We will find expressions for $(\Delta 11) \Psi$, $(1 \Delta 1) \Psi$
and $(11 \Delta) \Psi$ as sums of terms of the form
$[x,y]\lambda(u,v)$, where $x,\ y,\ u,\ v$ are all single-chord
diagrams on four strands. In fact, each of $(\Delta 11) \Psi$, $(1
\Delta 1) \Psi$ and $(11 \Delta) \Psi$ will be a sum of 4 terms of
that form. Then, as shown by Kurlin \cite{Kurlin} in his Proposition
5.10, it can be seen that cancellations occur in the (linearized)
pentagon, leading to the desired equality.

Since the pentagon lives in the space of chord diagrams on four
strands, we need to make precise what this space is. Thus we will
will be concerned with the space $\cA_4$ (or rather its completion
$\hat{\cA_4}$) generated by single horizontal chord diagrams on 4
vertical strands, ie

\begin{align}
a:= \ta, \quad & b:= \tb, \quad c:= \tc \\
d:= \td, \quad & e:= \te, \quad f:= \tf \notag \label{chords}
\end{align}
where $\tij$ represents the chord diagram with a single horizontal
chord resting on strands $i$ and $j$.  Obviously, $\tij = t^{ji}$.

For each $l=1, \dotso, 4$, we get the expected $4T$ relations among
the chord diagrams whose endpoints rest on the three strands other
than strand $l$, ie

\begin{equation}
[t^{ij},t^{jk}] = [t^{jk},t^{ki}] = [t^{ki},t^{ij}] \label{4Tijk}
\end{equation}
where $l\notin \{i,j,k\}$.

These are just the $4T$ relations we would get if we dropped all
chord diagrams with chords resting on strand $l$, and viewed the
remaining chord diagrams as forming a copy of $\cA_3$.  In addition,
however, we have a new kind of relation, known as `locality in
space' relations, which provide that we can commute any two chords
whose four endpoints rest on four different strands. In other words,

\begin{equation}
[\tij,\tkl] = 0, \quad whenever \ \#\{i,j,k,l\}=4 \label{locality}
\end{equation}

We will use the notation $\AFF$ to refer to $\hat{\cA}_4$ modulo
1FF. Of course, 1FF in this context refers to the relations

\begin{equation*}
u[xy] = 0 \quad for \ u,x,y \in \cA_4, deg(u) \geq 1
\end{equation*}

One can easily check that the element $a+b+c+d+e+f = \sum_{1\leq i <
j \leq 4} \tij$ is central in $\hat{\cA}_4$.  Since $a+b+c$ is
central in the algebra $\hat{\cA}_3$ generated by $\{a,b,c\}$, it
follows also that

\begin{equation}
[d+e+f,[ab]] = 0 \label{def}
\end{equation}
and hence

\begin{equation*}
[ab]f = [ab](-e-d)
\end{equation*}
modulo 1FF.

We will now also take $\cL$ to be the Lie algebra generated by the
symbols $\{\tij\}_{1\leq i<j \leq 4}$, with the commutation
relations given by $4T$ and the locality relations. In fact we
really deal with the completion $\hat{\cL}$, but will still write
$\cL$. Moreover, we take $\LCC$ to refer to (the completed) $\cL$
modulo $\Lfour$.

Finally we introduce the notation:
\begin{itemize}
\item $\overline{\tij}:= t^{4k} \quad i \negthinspace f\ \{i,j,k\}=\{1,2,3\}$
\item $\overline{t^{4i}}:= t^{4i} \quad i \negthinspace f\ i\in\{1,2,3\}$
\end{itemize}
Thus, `barring' a chord amounts to replacing it with its
`complementary' chord (ie, resting on complementary strands), with
which it commutes, if the chord does not sit on strand 4 (ie,
$\bar{a}=e,\ \bar{b}=f,\ \bar{c}=d$).  Barring a chord that sits on
strand $4$ has no effect (ie $\bar{d}=d,\ \bar{e}=e,\ \bar{f}=f$).

We will also use a short-hand notation for commutators:  if $l\in
\{1,2,3,4\}$, and $(i,j,k) = (1,...,\hat{l},...,4)$, we write:

\begin{equation*}
[\hat{l}] := [\tij,\tjk]
\end{equation*}
ie we identify a (degree two) commutator by the strand it does not
touch, with sign given by the stated assumption on the order of
$i,j,k$.

We now derive some basic results about the Lie algebra $\LCC$. We
first give a set of vector space generators of $\LCC$ (compare
\cite{Kurlin} Lemma 5.5).

\begin{proposition}
$\LCC$ is generated as a vector space by the elements:
\begin{enumerate}
\item $[(\tij)^r(\tjk)^s[\hat{l}]]$, where $(i,j,k)=(1, ...,
\hat{l}, ..., 4)$, and $r,s \in \N$. These just generate the Lie
subalgebras obtained by dropping strand $l$, and viewing diagrams on
the remaining strands as constituting a copy of $\hat{\cL}_3$; and

\item $[(t^{i4})^r(t^{j4})^s[\hat{4}]]$, where $\{i,j\} \subseteq
\{1, ..., \hat{l}, ..., 4\}$, and $r,s \in \N$.
\end{enumerate}
\label{L4generators}
\end{proposition}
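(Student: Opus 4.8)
The plan is to reduce the statement to the already-established structure theory for $\hat{\cL}_3$. The key observation is that $\LCC = \hat{\cL}_4 / \Lfour$ carries, for each strand $l \in \{1,2,3,4\}$, a Lie subalgebra $\hat{\cL}_3^{(l)}$ spanned by the chords not touching strand $l$; by the $4T$ relations (\ref{4Tijk}), this is genuinely a copy of $\hat{\cL}_3$, and passing to the quotient $\Lfour$ of $\hat{\cL}_4$ restricts to the quotient $[[\cL,\cL],[\cL,\cL]]$ inside $\hat{\cL}_3^{(l)}$. From the Lie Algebra Decomposition remark together with Propositions \ref{lieseries} and \ref{liemonomials}, the degree $\geq 2$ part of each such $\LCC^{(l)}$ has vector space basis $\{[(\tij)^r(\tjk)^s[\hat l]] : r,s \geq 0\}$ — these are exactly the generators listed in item (1) (and the four linear generators $t^{4i}$ together with the strand-$l$ basis vectors account for degree one). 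So it remains to show that the subspaces coming from the four strands, \emph{plus} the extra family in item (2), span all of $\LCC$ in degree $\geq 2$.

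The main step is a spanning argument in degree $\geq 2$. First I would invoke Proposition \ref{lieseries}(iii) in the four-strand setting: since $\LCC$ is spanned (in degree $\geq 2$) by iterated brackets of the six generators, and any such bracket can — using Jacobi exactly as in the proof of Proposition \ref{lieseries}, now combined with the locality relations (\ref{locality}) and the centrality relation (\ref{def}) — be pushed into the normal form $[w\, t^{pq}t^{qr}]$ for a suitable innermost commutator $[\hat{l}] = [t^{pq},t^{qr}]$ and $w$ a commutative word in the remaining generators. The point is then to argue that one can always choose the innermost commutator to be one of the four $[\hat l]$, and that one can eliminate all but two distinct letters from the word $w$: letters lying on strand $l$ commute with $[\hat l]$ and with each other and can be absorbed using centrality, so one reduces either to a word in the two ``internal'' generators $\tij,\tjk$ (giving item (1)) or — when the relevant commutator is $[\hat 4]$ — to a word in the two generators $t^{i4}, t^{j4}$ that touch strand $4$ (giving item (2)). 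The locality relations are what make the ``clean up the word'' step work, since any generator disjoint from all strands appearing in $[\hat l]$ simply commutes past it.

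The hard part, and the place I would be most careful, is the bookkeeping in that reduction: showing that \emph{every} degree-$n$ bracket can be brought into one of the two listed shapes, i.e. that no genuinely new family of generators (with three distinct letters in the word, or with an innermost commutator not of the form $[\hat l]$) survives modulo $\Lfour$ and the locality and centrality relations. I would handle this by an induction on the bracket length, at each stage peeling off the outermost generator and using Jacobi plus locality to move it next to either a matching generator (extending the word) or a disjoint one (commuting it inward), exactly mirroring the three-strand argument but with the extra case analysis forced by the sixth generator. A secondary subtlety is the degree-one part: one should note explicitly that the $t^{4i}$ are not expressible via the item-(1) generators, so they must be (and are) subsumed under item (2) with $r = s = 0$, or else listed separately — I would add a sentence to the statement or proof clarifying that the four linear chords $t^{14},t^{24},t^{34}$ (and the three $t^{ij}$, $i,j \leq 3$) are included among the listed generators in the $r=s=0$ case. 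Everything else is routine manipulation with Jacobi, (\ref{locality}), (\ref{def}) and Lemma \ref{eliminatec}.
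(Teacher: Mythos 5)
There is a genuine gap at the heart of your reduction: the claim that ``letters lying on strand $l$ commute with $[\hat{l}]$ and with each other and can be absorbed using centrality'' is false. For instance $[t^{12},[\hat{1}]]=[t^{12},[t^{23},t^{34}]]=[[t^{12},t^{23}],t^{34}]=-[t^{34},[\hat{4}]]$, a nonzero degree-three element of $\LCC$ (the ideal $\Lfour$ only begins in degree four); likewise $[t^{14},[\hat{1}]]=-[t^{14},[\hat{4}]]$. What is true is only that the \emph{sum} of the three chords meeting strand $l$ commutes with $[\hat{l}]$ (the analogue of (\ref{def})), which lets you eliminate one such letter from the word, not all of them. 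Consequently your dichotomy --- innermost commutator $[\hat{l}]$ with $l\neq 4$ yields item (1), innermost commutator $[\hat{4}]$ yields item (2) --- is not correct: words over $[\hat{l}]$, $l\neq 4$, containing chords that meet strand $l$ are not absorbed, and they are precisely the source of further item-(2) generators. Your appeal to locality (``any generator disjoint from all strands appearing in $[\hat{l}]$ commutes past it'') is also vacuous here, since $[\hat{l}]$ involves three of the four strands and every chord meets at least one of them.

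The missing ingredients are exactly the two identities the paper proves for this purpose. Lemma \ref{convertkto4}, $[t^{rs}[\hat{s}]]=(-1)^s[\overline{t^{rs}}[\hat{4}]]$, converts a bracket of a strand-$l$-meeting chord against $[\hat{l}]$ into a bracket against $[\hat{4}]$ with a strand-$4$ chord adjacent to it; Lemma \ref{converttobar} then says that once such a factor $t^{4k}$ sits next to $[\hat{4}]$, every remaining letter of the word may be replaced by its complementary chord; finally (\ref{def}) removes one of the three strand-$4$ letters, landing in family (2). Words involving only the three chords disjoint from strand $l$ reduce inside the corresponding copy of $\hat{\cL}_3$ to family (1), as you say, and your first step (the normal form $[w\,t^{pq}t^{qr}]$ with innermost commutator $\pm[\hat{l}]$, via the four-strand version of Proposition \ref{lieseries}) matches the paper's Remark and is fine. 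But without something equivalent to Lemmas \ref{convertkto4} and \ref{converttobar} your induction does not close: it has no way to rewrite, say, $[t^{12}(t^{23})^r[\hat{1}]]$ or $[t^{14}(t^{23})^r[\hat{4}]]$ into the listed shapes. A smaller slip: the $r=s=0$ members of the listed families are the degree-two brackets $[\hat{l}]$, not the degree-one chords, so the linear chords are not ``included in the $r=s=0$ case''; you are right, though, that the statement should be read as concerning the part of degree at least two, with the six chords spanning degree one.
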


To prove Proposition \ref{L4generators}, we need three lemmas,
starting with this lemma which applies in $\cL_4$ (compare
\cite{Kurlin} Claim 5.2(c)):

\begin{lemma}
In $\cL_4$ we have
\begin{equation*}
[t^{rs}[\hat{s}]]=(-1)^s [\overline{t^{rs}}[\hat{4}]], \quad r,s
\text{ distinct elements of } \{1,2,3,4\}
\end{equation*}
\label{convertkto4}
\end{lemma}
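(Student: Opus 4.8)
The plan is to prove the lemma using only the Jacobi identity together with the two defining relations of $\cL_4$ — the $4T$ relations (\ref{4Tijk}) and the locality relations (\ref{locality}) — with no appeal to centrality of the total sum of chords. I would begin by recording the symmetry of the degree-two brackets. Writing $C(i,j,k):=[t^{ij},t^{jk}]$ for an ordered triple of distinct strands, the relation (\ref{4Tijk}) says exactly that $C$ is invariant under cyclic permutation of $(i,j,k)$, while the consequence $[t^{ij},\,t^{jk}+t^{ik}]=0$ of (\ref{4Tijk}) gives $C(i,k,j)=-C(i,j,k)$; hence $C$ is \emph{alternating} in its three arguments. In particular, if $l$ denotes the fourth strand, then $C(i,j,k)=\varepsilon\,[\hat{l}]$ where $\varepsilon=\pm1$ is the sign of the permutation sorting $(i,j,k)$ into increasing order. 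The case $s=4$ of the lemma is then immediate, since $\overline{t^{r4}}=t^{r4}$ and $(-1)^4=1$, so both sides equal $[t^{r4}[\hat{4}]]$.

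The heart of the argument is a single reduction step, valid for $s\in\{1,2,3\}$. Using the alternating property, I would rewrite $[\hat{s}]=\varepsilon\,[t^{ru},t^{uv}]$ for a suitable sign $\varepsilon=\pm1$, where $r$ is the strand paired with $s$ in the chord $t^{rs}$ that we are commuting against and $\{u,v\}$ are the remaining two strands. Then $\{r,s,u,v\}=\{1,2,3,4\}$, so $t^{rs}$ and $t^{uv}$ are complementary chords and commute by (\ref{locality}); consequently the Jacobi identity collapses, and, since $[t^{rs},t^{ru}]=C(s,r,u)$ is a degree-two bracket on the three strands $\{r,s,u\}$, it equals $\varepsilon'\,[\hat{v}]$ with $v$ the fourth strand:
\[
[t^{rs}[\hat{s}]]=\varepsilon\,[t^{rs},[t^{ru},t^{uv}]]=\varepsilon\,[[t^{rs},t^{ru}],t^{uv}]=\varepsilon\varepsilon'\,[[\hat{v}],t^{uv}]=-\varepsilon\varepsilon'\,[t^{uv}[\hat{v}]].
\]
Thus a bracket ``based at strand $s$'' is converted to one of the same shape based at strand $v$.

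Since I am free to choose which of the two strands outside $\{r,s\}$ plays the role of $u$ and which plays $v$, I would take $v=4$ whenever $r\neq4$; in that case $t^{uv}=t^{u4}$ is precisely $\overline{t^{rs}}$ (the strand of $\{1,2,3\}$ missing from $\{r,s\}$ is exactly $u$), and the lemma follows in one step. If instead $r=4$, then necessarily $v\in\{1,2,3\}$ and $u,v$ are the two strands of $\{1,2,3\}\setminus\{s\}$; applying the same reduction once more to $[t^{uv}[\hat{v}]]$ — now with the role of $r$ played by $u\neq4$ — produces $\pm[t^{s4}[\hat{4}]]=\pm[\overline{t^{4s}}[\hat{4}]]$. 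Either way we land on a scalar multiple of $[\overline{t^{rs}}[\hat{4}]]$.

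The one point that genuinely has to be executed rather than asserted is the sign bookkeeping: tracking $\varepsilon$, $\varepsilon'$ and the Jacobi sign through at most two iterations and checking that the product is exactly $(-1)^s$, the $s$-dependence entering because the sorting permutation behind $\varepsilon$ depends on whether $s$ equals $1$, $2$ or $3$. I expect this to be routine once each of $\varepsilon,\varepsilon'$ is read off as the sign of an explicit $3$-element sorting permutation; spot checks on $(r,s)=(2,1),(3,1),(1,3)$ and on the two-step case $(r,s)=(4,1)$ already display the correct pattern, and the general case is the same computation with the indices renamed.
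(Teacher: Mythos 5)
Your proposal is correct and takes essentially the same route as the paper's proof: Jacobi together with the locality relations (\ref{locality}) to collapse the double bracket, the 4T-derived cyclic/alternating symmetry of the degree-two brackets $[t^{ij},t^{jk}]$ to rewrite $[\hat{s}]$ and to identify $[t^{rs},t^{ru}]$ as $\pm[\hat{v}]$, a second application of the same step to handle $r=4$, and signs read off from permutation parities (the paper's ``cyclic iff $k$ is odd'' bookkeeping). The sign check you defer is indeed routine: enumerating the sorting signs $\varepsilon,\varepsilon'$ over the finitely many cases (including both choices of $u,v$ in the two-step $r=4$ case) gives exactly $(-1)^s$ throughout, so no gap remains.
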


\begin{proof}

We let $i\in \{1,2,3\}$, and then choose $j,k$ so that $(ijk)$ is a
cyclic permutation of $(123)$.  Thus $[\tij \tjk]=[\hat{4}]$.

We note that, by 4T, $[\tij \tjk] = [\tjk \tki]$. Hence, $[\ti \tij
\tjk]$ can also be written $[\ti \tjk \tki]$, and:

\begin{align*}
[\ti \tij \tjk] &= -[\tij \tjk \ti] - [\tjk \ti \tij] = -[\tjk
\ti \tij] \\
[\ti \tjk \tki] &= -[\tjk \tki \ti] - [\tki \ti \tjk] = -[\tjk \tki
\ti]
\end{align*}
where the first equality in each line is the Jacobi relation, and
the second equality comes from the locality in space relations.

From the assumptions on $i,j,k$, we see that $(4,i,j)$ is some
permutation of $(1,...,\hat{k},...4)$, and one can readily confirm
that the permutation is in fact \emph{cyclic} iff k is odd.  Hence
we get:

\begin{equation}
[\tjk [\hat{k}]] = (-1)^k [\ti \tij \tjk] = (-1)^k
[\overline{\tjk}[\hat{4}]] \label{khatITO4hat}
\end{equation}

By similar reasoning one can see that:

\begin{equation}
[\tjk [\hat{j}]] = (-1)^j [\ti \tjk \tki] = (-1)^j
[\overline{\tjk}[\hat{4}]] \label{jhatITO4hat}
\end{equation}

Next we can repeat the process to get:

\begin{equation*}
[\ti[\tij \tjk]] = -[\tjk \ti \tij] = -[\tjk t^{j4} \ti] = [t^{j4}
\ti \tjk] + [\ti \tjk t^{j4}] = [\ti \tjk t^{j4}]
\end{equation*}
Here, for the first equality we used the $4T$ relations to get
$[t^{4i} \tij] = [t^{j4} \ti]$, for the second equality we used
Jacobi, and for the third we used the locality in space relations.

We now note that $[\tjk \tj] = (-1)^{\eta} [\hat{i}]$, where $\eta =
+1$ (or $-1$) when $(k,j,4)$ is (or is not) a cyclic permutation of
$(1, ...,\hat{i},... 4)$.  Moreover, $(k,j,4)$ is a cyclic
permutation of $(1, ..., \hat{i}, ..., 4)$ if and only if $i$ is
even. Hence:

\begin{equation*}
[\ti [\hat{i}]] = (-1)^i [\ti \tij \tjk] = (-1)^i [\ti[\hat{4}]]
\end{equation*}

When $s \neq 4$, Equation (\ref{khatITO4hat}) gives us the desired
result in the case $r=j<k=s$, Equation (\ref{jhatITO4hat}) in the
case $s = j<k=r$, and the last equation in the case $r=4 \neq s$.
The case $s=4$ is trivial.

\end{proof}

We now move to our various quotient spaces.  We first note the
remark (see also \cite{Kurlin} Lemma 5.4(a)):

\begin{remark}
By an obvious generalization of the proof of Proposition
\ref{lieseries}, if w is a word on $\{a,b,c,d,e,f\}$ with at least
two letters, and $u,v \in \{a,b,c,d,e,f\}$, we have

\begin{align*}
[uvw] &= [vuw] \quad mod\ [[\cL,\cL],[\cL,\cL]] \\
[w]uv &= [w]vu \quad mod\ 1FF\ or\ 2FF
\end{align*}

\end{remark}

In $\LCC$ we get the result (compare \cite{Kurlin} Lemma 5.4(b)):

\begin{lemma}
In $\LCC$, if $\omega$ is a commutative power series in the $\tij,\
1\leq i<j \leq 4$, and $k=1,2,3$, we have

\begin{equation*}
[\omega\ \tk [\hat{4}]] = [\overline{\omega}\ \tk [\hat{4}]]
\end{equation*}
ie the presence of the factor $\tk$ multiplying the commutator
$[\hat{4}]$ allows us to replace all $\tij$s by $\overline{\tij}$s.
\label{converttobar}
\end{lemma}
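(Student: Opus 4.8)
The plan is to reduce the general statement about an arbitrary commutative power series $\omega$ to a statement about a single monomial, and then to prove the monomial case one variable at a time using the conversion machinery already available in $\LCC$. By linearity and continuity it suffices to treat the case where $\omega$ is a monomial in the six chords $\{a,b,c,d,e,f\}$; and since replacing $\omega$ by $\overline{\omega}$ amounts to replacing each occurrence of $a,b,c$ by $e,f,d$ respectively while leaving $d,e,f$ alone, it is enough to show that in an expression of the form $[\omega\,\tk[\hat 4]]$ we may, one at a time, trade a single factor of $a$ (resp.\ $b$, resp.\ $c$) appearing in $\omega$ for its bar $e$ (resp.\ $f$, resp.\ $d$), the rest of $\omega$ being carried along unchanged.

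The key observation is that the factor $\tk = t^{4k}$ sitting immediately in front of $[\hat 4]$ freezes the tail: by the remark preceding the lemma (the $1FF$/$2FF$ version of Proposition \ref{lieseries}), any two chords appearing \emph{after} a foot commute, so inside $[\omega\,\tk[\hat 4]]$ we are free to reorder all the letters of $\omega$ at will. Now I would recall that $a+b+c+d+e+f$ is central in $\hat\cA_4$, and more to the point that $d+e+f$ commutes with $[ab]=[\hat 4]$ by (\ref{def}), so that $[ab]f = [ab](-e-d)$ modulo $1FF$; by the analogous centrality statements on the other triples of strands one gets, for each of $a,b,c$, a relation expressing a single such chord (when it multiplies $[\hat 4]$ from behind, through the commuting tail) as minus the sum of the other two strand-4 chords plus lower-strand corrections. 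The cleanest route, though, is to use Lemma \ref{convertkto4}: it converts a commutator $[\hat k]$ (or $[\hat j]$, $[\hat i]$) dangling off a chord into $\pm[\hat 4]$ dangling off the barred chord, and this is exactly the elementary move that interchanges a chord with its complement at the cost of a sign. So I would peel off one letter of $\omega$, say a factor $a$, write the relevant degree-three piece $[a\,\tk[\hat 4]]$ (or rather the piece where that $a$ is the innermost letter, which we may arrange by the reordering freedom above), apply the centrality relation on the appropriate triangle to replace that $a$, absorb the resulting sign, and observe that the correction terms lie in $\Lfour$ and hence vanish in $\LCC$.

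The main obstacle I anticipate is bookkeeping of signs and of which correction terms survive: when one replaces a bare chord $a$ by $e$ there is in general also a term proportional to one of the other strand-$4$ chords, and one must check that every such term, once it is placed inside $[\,\cdot\,\tk[\hat 4]]$, is a bracket of brackets and therefore zero modulo $\Lfour$ — this is where the hypothesis $k\in\{1,2,3\}$ and the presence of the extra foot $\tk$ do the real work, since without that foot the term would not be forced into $[[\cL,\cL],[\cL,\cL]]$. I would organize the argument so that the single-letter conversion is stated as a sub-claim, proved once by direct appeal to Lemma \ref{convertkto4} and (\ref{def}), and then iterated over the letters of the monomial $\omega$; the passage from monomials to the full power series is then immediate by linearity and the fact that $\LCC$ is a completed (hence topologically complete) graded vector space.
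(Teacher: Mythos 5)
Your overall reduction---linearity down to monomials, converting one letter at a time using the reordering freedom $[uvw]=[vuw]$ modulo $\Lfour$, with Lemma \ref{convertkto4}, 4T and (\ref{def}) as the tools---is indeed the paper's strategy. But the single-letter conversion is the entire content of the lemma, and the mechanism you sketch for it would fail. First, you propose to reorder so that the letter to be barred (say $a=\ta$) becomes the innermost letter, adjacent to $[\hat 4]$, and to convert it there. No such move exists: $[a\,[\hat 4]]\neq[\bar a\,[\hat 4]]$ already in degree three (apply $\eta_4$: the left-hand side survives, the right-hand side dies), and Lemma \ref{convertkto4} only converts $[t^{rs}[\hat s]]$, i.e.\ it needs the chord to sit next to the bracket labelled by one of its \emph{own} strands; next to $[\hat 4]$ it says nothing unless the chord already touches strand $4$. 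Second, the ``centrality relation on the appropriate triangle'' replaces $a$ by $-b-c$ (or $f$ by $-d-e$ via (\ref{def})), never by its bar $e$; centrality alone cannot produce the complementary chord. Third, your fallback---that the correction terms ``lie in $\Lfour$ and hence vanish''---is false: a term such as $[\ti\,\tk\,[\hat 4]]$ is a left-normed bracket of generators, precisely one of the spanning elements in Proposition \ref{L4generators}, and is not in $\Lfour$. Relatedly, your diagnosis of why the foot matters (that it forces corrections into $\Lfour$) is not the real reason.

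What the foot $\tk$ actually does, and what the paper's proof exploits, is to let you toggle the inner bracket: since $\tk=t^{4k}$ touches strand $k$, Lemma \ref{convertkto4} gives $[\tk\,[\hat 4]]=(-1)^k[\tk\,[\hat k]]$. So for a letter $\tij$ sharing a strand with the foot, say the foot is $\tj$, one has $[\tij\,\tj\,[\hat 4]]=(-1)^j[\tij\,\tj\,[\hat j]]=(-1)^j[\tj\,\tij\,[\hat j]]=[\tj\,\overline{\tij}\,[\hat 4]]$, where the outer equalities are Lemma \ref{convertkto4} (applied first to the foot, then to the letter now adjacent to $[\hat j]$), the middle one is the mod-$\Lfour$ swap, and the two signs $(-1)^j$ cancel; this is the step that actually creates the bar, and it is an exact identity with nothing left over. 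For a letter $\tij$ sharing no strand with the foot $\tk$, first use 4T to write $[\tij\,[\hat 4]]=[(-\tjk-\tki)\,[\hat 4]]$, apply the previous case to $\tjk$ and $\tki$ (each touches strand $k$), and recombine the resulting $-\ti-\tj$ into $\tk=\overline{\tij}$ using (\ref{def}). If you restate your single-letter sub-claim in these two cases, with the signs tracked explicitly, the rest of your reduction (iteration over the letters of a monomial, then linearity and completeness) goes through as you describe.
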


\begin{proof}
It suffices to show this for monomials of the given form, and also
we may assume $\{i,j,k\} = \{1,2,3\}$ since the result is trivial
when $i$ or $j$ is equal to 4.  We first consider the case $[\tij
\tj [\hat{4}]]$:

\begin{equation*}
[\tij \tj [\hat{4}]] = (-1)^j [\tij \tj [\hat{j}]] = (-1)^j [\tj
\tij [\hat{j}]] = [\tj \tk [\hat{4}]] = [\tk \tj [\hat{4}]]
\end{equation*}
(where the third equality uses Lemma {convertkto4}) as needed, for
this case.

Next we take the case $[\tij \tk [\hat{4}]]$.  We note that $[\tij
[\hat{4}]] = [(-\tjk - \tki) [\hat{4}]]$ by 4T, so:

\begin{align*}
[\tij \tk [\hat{4}]] &= [\tk \tij [\hat{4}]] = [\tk (-\tjk - \tki)
[\hat{4}]] \\
&= [\tk (-\ti -\tj)[\hat{4}]] = [\tk \tk [\hat{4}]]
\end{align*}
where in the second last equality we applied the first case twice,
and in the last equality we applied Equation (\ref{def}). This
completes the proof.
\end{proof}

We now give a lemma, valid in $\LCC$, which in particular will allow
us to derive explicit expressions for the action of strand doubling
in $\LCC$ (see \cite{Kurlin} Claim 5.7):

\begin{lemma}
We take $l\in \{1,2,3,4\}$, and $(i,j,k)$ a cyclic permutation of
$(1, ..., \hat{l}, ... , 4)$ (so that $[\tij,\tjk] = [\hat{l}]$). We
also take $u\in \{i,j,k\}$ and $m$ a non-negative integer. Then, in
$\LCC$,
\begin{equation*}
[(\tij + t^{ul})^m [\hat{l}]] = [(\tij)^m \hl] -
(-1)^l[(\overline{\tij})^m [\hat{4}]] + (-1)^l [(\overline{\tij} +
\overline{t^{ul}})^m [\hat{4}]]
\end{equation*}
\label{expandDelta}
\end{lemma}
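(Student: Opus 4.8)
The plan is to reduce the identity to a purely combinatorial manipulation of the degree-two commutator $[\hat l]$ against a power of a sum of two commuting chords, using the barring machinery already established in Lemmas \ref{convertkto4} and \ref{converttobar}. First I would record the key vanishing fact that makes everything collapse: since $\tij$ and $t^{ul}$ both touch strand $l$ (indeed $\tij$ touches $i,j$ and $t^{ul}$ touches $u\in\{i,j,k\}$ and $l$), while $[\hat l]=[\tij,\tjk]$ is built from chords avoiding strand $l$, we are working entirely inside the copy of $\hat{\cL}_3$ obtained by dropping strand $l$ together with the extra generator $t^{ul}$; the point is that in $\LCC$ the expression $[(\tij+t^{ul})^m[\hat l]]$ is a Lie series beginning in degree $\geq 2$, so Propositions \ref{lieseries} and \ref{liemonomials} apply and we may treat $(\tij+t^{ul})^m$ as an ordinary commuting power-series coefficient sitting to the left of the degree-two bracket.

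Next I would expand $(\tij+t^{ul})^m$ binomially. The term with no $t^{ul}$ is exactly $[(\tij)^m[\hat l]]$, which is the first term on the right-hand side, so it remains to show that the sum of the terms involving at least one factor of $t^{ul}$ equals $-(-1)^l[(\overline{\tij})^m[\hat 4]] + (-1)^l[(\overline{\tij}+\overline{t^{ul}})^m[\hat 4]]$. For each mixed term $[(\tij)^{m-p}(t^{ul})^p[\hat l]]$ with $p\geq 1$, I would peel off one factor of $t^{ul}$ so that it multiplies $[\hat l]$ directly, i.e. rewrite it as $[(\tij)^{m-p}(t^{ul})^{p-1}\,t^{ul}[\hat l]]$, and then invoke Lemma \ref{convertkto4} (in the form $[t^{ul}[\hat l]]=(-1)^l[\overline{t^{ul}}[\hat 4]]$, taking $r=u$, $s=l$) to convert $[\hat l]$ into $(-1)^l[\hat 4]$ at the cost of barring the adjacent chord. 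Once $[\hat 4]$ has a $t^{ul}$-factor (note $\overline{t^{ul}}=t^{ul}$ when $l=4$, and in general $\overline{t^{ul}}$ touches strand $4$) adjacent to it, Lemma \ref{converttobar} lets me bar \emph{all} the remaining $\tij$ and $t^{ul}$ factors in the coefficient, turning $(\tij)^{m-p}(t^{ul})^{p-1}$ into $(\overline{\tij})^{m-p}(\overline{t^{ul}})^{p-1}$. Thus each mixed term becomes $(-1)^l[(\overline{\tij})^{m-p}(\overline{t^{ul}})^{p}[\hat 4]]$, and summing over $p=1,\dots,m$ gives $(-1)^l\bigl([(\overline{\tij}+\overline{t^{ul}})^m[\hat 4]] - [(\overline{\tij})^m[\hat 4]]\bigr)$, which is precisely the claimed right-hand side.

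I would close by checking the degenerate cases $m=0$ (both sides equal $[\hat l]$, using $-(-1)^l[\hat 4]+(-1)^l[\hat 4]=0$ together with $[\hat l]$ itself — here one should double-check that $[\hat l]$ and $(-1)^l[\hat 4]$ are \emph{not} being conflated, so the $m=0$ statement is the tautology $[\hat l]=[\hat l]$) and noting that $l=4$ makes the identity trivial since barring is the identity and $(-1)^l=1$ cancels the correction terms against each other only after one observes $[(\tij+t^{u4})^m[\hat 4]]$ already has the desired form. The main obstacle I anticipate is bookkeeping the signs: Lemma \ref{convertkto4} carries a sign $(-1)^s=(-1)^l$ that must be applied exactly once per mixed term (not once per factor of $t^{ul}$), so the careful step is isolating a \emph{single} $t^{ul}$ adjacent to the bracket before converting, and then verifying that re-barring via Lemma \ref{converttobar} introduces no further sign. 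A secondary subtlety is making sure Lemma \ref{converttobar} is applicable to the chord $\overline{t^{ul}}$ — one needs it to genuinely sit on strand $4$ (it does, since $u\in\{1,2,3\}$ forces $\overline{t^{ul}}=t^{u4}$ when $l\in\{1,2,3\}$, and $\overline{t^{u4}}=t^{u4}$ when $l=4$), which is what licenses barring the whole coefficient.
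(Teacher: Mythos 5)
Your proposal is correct and takes essentially the same route as the paper: binomial expansion of $(\tij+t^{ul})^m$, conversion of each mixed term from $[\hat{l}]$ to $(-1)^l[\hat{4}]$ via Lemma \ref{convertkto4}, barring of the remaining commutative coefficient via Lemma \ref{converttobar}, and resummation into $(-1)^l\bigl([(\overline{\tij}+\overline{t^{ul}})^m[\hat{4}]]-[(\overline{\tij})^m[\hat{4}]]\bigr)$; your peel-off-one-factor bookkeeping just makes explicit a step the paper compresses. (One cosmetic slip: for $u,l\in\{1,2,3\}$ one has $\overline{t^{ul}}=t^{4k}$ with $k$ the complementary index, not $t^{u4}$, but the property you actually need --- that $\overline{t^{ul}}$ rests on strand $4$ --- does hold in every case.)
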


\begin{proof}
\begin{align*}
[(\tij + t^{ul})^m \hl]  &= [(\tij)^m  \hl]  + \sum_{r=1}^m {m
\choose r} [(\tij)^{m-r} (t^{ul})^r  \hl]  \\
&= [(\tij)^m   \hl] + (-1)^l \sum_{r=1}^m {m \choose
r} [(\tij)^{m-r} (\overline{t^{ul}})^r   [\hat{4}]] \\
&= [(\tij)^m   \hl] + (-1)^l \sum_{r=1}^m {m \choose
r} [(\overline{\tij})^{m-r} (\overline{t^{ul}})^r   [\hat{4}]] \\
&= [(\tij)^m   \hl] - (-1)^l[(\overline{\tij})^m   [\hat{4}]] +
(-1)^l [(\overline{\tij}+ \overline{t^{ul}})^m  [\hat{4}]]
\end{align*}
as required (where in going from the first to second lines we used
Lemma \ref{convertkto4}, and in going from the second to third lines
Lemma \ref{converttobar} applies to $\tij$ since, in the summation,
$\overline{t^{ul}}$ appears with the power $r \geq 1$).

\end{proof}

\begin{proof}[Proof of Proposition \ref{L4generators}]
The proof of Proposition \ref{L4generators} is now a straightforward
combination of the past three lemmas.
\end{proof}

We can now derive explicit expressions for the action of $\Delta
11,\ 11\Delta\ and\ 1\Delta 1$ on $\Psi(\ta,\tb) = [\ta
\tb]\lambda(\ta,\tb)$ (\cite{Kurlin} Lemma 5.8).

\begin{proposition}
Mod $[[\cL,\cL],[\cL,\cL]]$, we have:
\begin{align*}
(\Delta 11)\Psi &= [\lambda(\tc,\te)  [\hat{2}]]  +
[\lambda(\tb,\te)   [\hat{1}]] -
[\lambda(\td,\te)  [\hat{4}]]  + [\lambda(\tf,\te)  [\hat{4}]]  \\
(11 \Delta) \Psi &= [\lambda(\ta,\td)  [\hat{3}]]  +
[\lambda(\ta,\tb)  [\hat{4}]]  - [\lambda(\te,\tf)  [\hat{4}]]  + [\lambda(\te,\td)  [\hat{4}]]  \\
(1 \Delta 1) \Psi &= [\lambda(\ta,\td)   [\hat{3}]] +
[\lambda(\tc,\te)  [\hat{2}]]  + [\lambda(\te,\td)  [\hat{4}]]  -
[\lambda(\td,\te) [\hat{4}]]
\end{align*}
\label{D11expansion}
\end{proposition}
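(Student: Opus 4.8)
The plan is to use the fact that each strand-doubling map $(\Delta 11),\ (1\Delta 1),\ (11\Delta)\colon \hat{\cA}_3 \to \hat{\cA}_4$ is an algebra homomorphism, hence a Lie homomorphism, and is therefore determined by its action on the generators $a,b,c$. Reading off the definition of $\Delta_i$ (double strand $i$, sum over liftings) I would first record
\begin{align*}
(\Delta 11)&\colon\ a \mapsto b+c,\quad b \mapsto e,\quad c \mapsto d+f,\\
(1\Delta 1)&\colon\ a \mapsto a+c,\quad b \mapsto d+e,\quad c \mapsto f,\\
(11\Delta)&\colon\ a \mapsto a,\quad b \mapsto b+d,\quad c \mapsto c+f.
\end{align*}
Since $\Psi = [ab]\lambda(a,b)$ is, by Proposition \ref{liemonomials}, the Lie series built from the commutative symmetric power series $\lambda$, applying one of these maps amounts to substituting the above into that Lie series; e.g. $(\Delta 11)\Psi$ is the Lie series in $\hat{\cL}_4$ obtained from $\lambda$ by the replacement $a \mapsto b+c$, $b \mapsto e$.

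The substance of the proof is then to reduce each of the three resulting elements of $\hat{\cL}_4$ to the normal form of Proposition \ref{L4generators}, modulo $\Lfour$, and the three lemmas above are exactly the tools for this. First, using the bilinearity of the Lie-word bracket $[W\,u\,v]$ in its last two slots modulo $\Lfour$ (which is just the Jacobi computation in the proof of Proposition \ref{lieseries}), I would expand the innermost bracket produced by the substitution — for $(\Delta 11)$ this is $[b+c,e] = [b,e]+[c,e] = [\hat{1}]+[\hat{2}]$, and for $(1\Delta 1)$ it is $[a+c,d+e] = [\hat{3}]+[\hat{2}]$ after the locality relations kill the cross terms — and likewise split the word-part into monomials. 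The key point is that, after using the $4T$ relation where needed, the two-chord sums occurring in the word-part (such as $b+c = t^{23}+t^{31}$ for $(\Delta 11)$) are of the shape $\tij + t^{ul}$ with $l$ the doubled strand, so that Lemma \ref{expandDelta} applies monomial-by-monomial and rewrites each doubled power as an un-doubled term $[(\tij)^m\hl]$ — which lives in the ``drop strand $l$'' copy of $\hat{\cL}_3$ — plus two terms carrying the bracket $[\hat{4}]$, one fully barred and one partially barred. Lemma \ref{converttobar}, assisted by Lemma \ref{convertkto4}, is then used to absorb the strand-$4$ chord that always accompanies these $[\hat{4}]$-terms, and the centrality relation (\ref{def}) to collapse the barred powers.

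Finally I would reassemble the monomials into power series. The un-doubled family sums to a single term $[\lambda(\cdot,\cdot)[\hat{l}]]$ for $l \in \{1,2,3\}$ (the copy of $\Psi$ carried by the three surviving strands), while the two $[\hat{4}]$-families collapse, using that $\lambda$ is symmetric and that barring acts by $a\mapsto e$, $b\mapsto f$, $c\mapsto d$, to the pair of terms $\pm[\lambda(\cdot,\cdot)[\hat{4}]]$ appearing in the statement; doing this for each of $(\Delta 11)$, $(1\Delta 1)$, $(11\Delta)$ yields the three displayed formulas. I expect the only real difficulty to be bookkeeping rather than anything conceptual: tracking the signs $(-1)^l$ from Lemma \ref{expandDelta} and $(-1)^s$ from Lemma \ref{convertkto4}, checking that after all substitutions and collapses exactly four terms survive with the stated arguments of $\lambda$ and the stated commutators, and verifying that the bilinear splitting of $[W\,(x+y)\,z]$ and of the word-part is legitimate modulo $\Lfour$. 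To keep errors under control I would carry out $(\Delta 11)$ in full detail and then indicate the parallel, slightly more involved, computations for $(1\Delta 1)$ and $(11\Delta)$.
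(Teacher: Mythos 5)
Your proposal follows essentially the same route as the paper: reduce to monomials of the Lie series, substitute the images of the generators under each doubling map, split the inner bracket (using locality for $(1\Delta 1)$), and then apply Lemma \ref{expandDelta} (with Lemmas \ref{convertkto4} and \ref{converttobar} and relation (\ref{def}) behind it) to push everything into the $[\hat{l}]$- and $[\hat{4}]$-normal form of Proposition \ref{L4generators}, carrying out $(\Delta 11)$ in detail and treating the other two cases analogously. The plan and the identified tools are correct; the remaining work is exactly the sign and term bookkeeping you describe.
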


\begin{proof}
It suffices to prove the statements for monomials.  We begin with:

\begin{align*}
(\Delta 11) [(\ta)^k (\tb)^l  [\hat{4}]]  &= [(\tc+\tb)^k
(\te)^l [\tc+\tb,\te]] \\
&= [(\tc+\tb)^k (\te)^l  [\hat{2}]]  + [(\tc+\tb)^k (\te)^l  [\hat{1}]]  \\
&= [(\tc)^k (\te)^l  [\hat{2}]]  - [(\td)^k (\te)^l  [\hat{4}]]  +
[(\td+\tf)^k (\te)^l  [\hat{4}]]  \\
& \quad + [(\tb)^k (\te)^l   [\hat{1}]] + [(\tf)^k (\te)^l
[\hat{4}]] - [(\td+\tf)^k (\te)^l  [\hat{4}]]  \\
&= [(\tc)^k (\te)^l [\hat{2}]]  + [(\tb)^k (\te)^l  [\hat{1}]]  -
[(\td)^k (\te)^l  [\hat{4}]]  \\
& \quad + [(\tf)^k (\te)^l  [\hat{4}]]
\end{align*}
where we used Lemma \ref{expandDelta} in going from the second to
the third lines.

The proof of the relation for $(11 \Delta)$ and $(1 \Delta 1)$ is
similar (though involving iterated use of Lemma \ref{expandDelta} in
the case of $(1 \Delta 1)$).
\end{proof}

We note finally that, while the results of this section have been
stated in terms of the Lie algebras $\cL$ and $\LCC$, by means of
Proposition \ref{liemonomials} these results have immediate
analogues in the language of $\cA$ and $\AFF$.

\subsection{Solving the Positive Hexagon Modulo 2FF}

\subsubsection{Overview}

We follow the same general strategy for reworking the positive
hexagon into a usable form modulo 2FF as we used modulo 1FF.
However, the details are more involved, and so an overview of the
specifics may be useful.

We are taking $\Phi$ to be of the form

\begin{equation}
\Phi(a,b) = 1 + [ab]\lambda(a,b) - a[ab]\partial_a\lambda(a,b) -
b[ab]\partial_b\lambda(a,b) \label{Phi2FFrepeat}
\end{equation}
where $\lambda(a,b)$ is a symmetric power series in the commutative
variables $a$ and $b$ (see equation (\ref{Phi2FF})).

We plug this expression into the positive hexagon, and rework it
into the form

\begin{equation*}
e^{b+c} - e^be^c =\ \{\text{expression in }\lambda \text{ and its
partials} \}
\end{equation*}
where we refer to the LHS as the `triangle'.  We find that the RHS
is a sum of terms premultiplied by $[ab]$, $b[ab]$ and $c[ab]$.

We then find an expression for the triangle which, as it happens,
consists also of a sum of terms premultiplied by $[ab]$, $b[ab]$ and
$c[ab]$.

Comparing the $[ab]$ terms on the LHS and RHS, we find an equation
in $\lambda$ only (no partials) which is just the 1FF positive
hexagon.  We then compare the $b[ab]$ terms on the LHS and RHS, and
find that the result is just the operator $(1 + \partial_a -
\partial_b)$ applied to the 1FF positive hexagon.  Similarly we find
that the $c[ab]$ terms simply give us the operator $(1 + \partial_a
- \partial_c)$ applied to the 1FF positive hexagon.  Not
surprisingly, it is again true that any $\lambda$ which satisfies
the positive hexagon also automatically satisfies the negative
hexagon.

We conclude that equation (\ref{Phi2FFrepeat}) gives a solution to
the 2FF hexagon whenever $\lambda$ is a solution to the 1FF hexagon.
Moreover, since by factoring out 2FF we get a quotient of Kurlin's
quotient, Kurlin's argument still applies to show that the unitarity
condition implies that such a solution is also a solution to the
pentagon.

\subsubsection{Simplifying the Positive Hexagon}

We go back to the equation (\ref{Phi2FFrepeat}) giving $\Phi$ in the
form:

\begin{equation*}
\Phi(a,b) = 1 + [ab]\lambda(a,b) - a[ab]\partial_a\lambda(a,b) -
b[ab]\partial_b\lambda(a,b)
\end{equation*}
with $\lambda(a,b)$ a commutative, symmetric power series.  Note
that this expression involves only the two variables $a$ and $b$,
but not $c$. However, the positive hexagon also involves expressions
in $(\Phi^{-1})^{132}$ and $\Phi^{312}$, and after the corresponding
permutations are given effect, the resulting expressions will
involve $a,\ b\ and\ c$.  We would like to get rid of one of these
variables, for instance by replacing $c$ by $-a-b$.  Lemma
\ref{eliminatec} tells us we can go this whenever $c$ appears after
a commutator, providing we are proceeding modulo 1FF.  However this
is no longer true modulo 2FF.  Instead, we have the following lemma
which is valid in $\hat{\cA}$, not just in a quotient:

\begin{lemma}

Given any power series $\alpha(a,b,c) \in \cA_3$,

\begin{equation}
[a,b]\ \alpha(a,b,c)=[ab]\ \alpha(a,b,-a-b) + (a+b+c)[ab]\
\partial_c \alpha(a,b,c) \label{[ab]alpha}
\end{equation}
where the partial is evaluated at $c=-a-b$. \label{corpartials}
\end{lemma}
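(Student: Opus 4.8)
The plan is to exploit the one structural fact that makes $c$ eliminable, namely that $a+b+c$ is central in $\hat\cA$ (this is the $4T$ relation), and to replace $c$ not by $-a-b$ directly but by $-a-b+s$, where $s:=a+b+c$. For any power series $\alpha(a,b,c)$ one then has the trivially honest equality in $\hat\cA$
$$[a,b]\,\alpha(a,b,c)=[a,b]\,\alpha(a,b,\,-a-b+s),$$
and the idea is to Taylor-expand the right-hand side in the central variable $s$. Let $\partial_c$ be the derivation of $\hat\cA$ with $\partial_c a=\partial_c b=0$ and $\partial_c c=1$ (the ordinary $c$-derivative; on a monomial it is ``delete one $c$, summed over occurrences'', and it extends by linearity and continuity). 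Then Taylor's formula gives
$$\alpha(a,b,\,-a-b+s)=\sum_{r\geq 0}\frac{s^{r}}{r!}\,(\partial_c^{r}\alpha)(a,b,\,-a-b),$$
a sum that converges in $\hat\cA$ because $s$ has positive degree, so each monomial of $\alpha$ contributes only finitely many terms in each total degree.

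Next I would multiply on the left by $[a,b]$ and move every power of $s$ to the front, which is legitimate precisely because $s=a+b+c$ is central, so $[a,b]\,s^{r}=s^{r}\,[a,b]$. This yields the exact identity in $\hat\cA$
$$[a,b]\,\alpha(a,b,c)=\sum_{r\geq 0}\frac{s^{r}}{r!}\,[a,b]\,(\partial_c^{r}\alpha)(a,b,\,-a-b).$$
The $r=0$ term is $[ab]\,\alpha(a,b,-a-b)$ and the $r=1$ term is $(a+b+c)[ab]\,\partial_c\alpha(a,b,c)\big|_{c=-a-b}$, exactly the two summands in equation (\ref{[ab]alpha}). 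For $r\geq 2$ the term is $s^{r}[ab](\cdots)$ with $s^{r}=(a+b+c)^{r}$ of degree $\geq 2$, and a product of a degree-$\geq 2$ element with a commutator (times anything) is zero modulo 2FF; hence all terms with $r\geq 2$ vanish and the identity collapses to (\ref{[ab]alpha}). In $\hat\cA$ itself one retains the whole sum, so that (\ref{[ab]alpha}) is the truncation that is needed for the 2FF computation.

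The Taylor step and the centrality bookkeeping are routine; the one place that needs care is the meaning of ``$\partial_c\alpha$ evaluated at $c=-a-b$'' for noncommutative $\alpha$. One must fix $\partial_c$ as the derivation above and check on monomials that the two-step operation ``differentiate in $c$, then set $c=-a-b$'' is exactly what the $s$-Taylor expansion produces at each order — immediate once a word $w$ in $a,b,c$ is written as $w(a,b,\,-a-b+s)$ and expanded. The only conceptual subtlety, already noted, is that the clean form (\ref{[ab]alpha}) is genuinely a statement modulo 2FF: the exact identity in $\hat\cA$ carries the tail $\sum_{r\geq 2}\frac{1}{r!}(a+b+c)^{r}[ab](\partial_c^{r}\alpha)\big|_{c=-a-b}$, and it is the 2FF relation ``degree $\geq 2$ times a commutator is zero'' that discards it; modulo 1FF both this tail and the $r=1$ term vanish, recovering Lemma \ref{eliminatec}.
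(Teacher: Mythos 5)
Your proof is correct, and at bottom it runs on the same mechanism as the paper's: centrality of $a+b+c$ lets you trade $c$ for $-a-b$ plus a central correction. The paper does this monomial by monomial, asserting $[ab]c^n=[ab](-a-b)^n+n(a+b+c)[ab](-a-b)^{n-1}$ and summing, whereas you do it in one stroke by writing $c=-a-b+s$ with $s=a+b+c$ central and Taylor-expanding in $s$; your verification that the derivation $\partial_c$ on noncommutative words reproduces the coefficients of the $s$-expansion is the right way to make the evaluation at $c=-a-b$ precise. The genuine difference is your explicit treatment of the order-$\geq 2$ tail $\sum_{r\geq 2}\frac{s^r}{r!}[ab](\partial_c^r\alpha)(a,b,-a-b)$: you are right that this does not vanish in $\hat{\cA}$ (for $\alpha=c^2$ the discrepancy is $(a+b+c)^2[a,b]\neq 0$), so the displayed identity is exact only when $\alpha$ is at most linear in $c$, and in general holds modulo $2FF$, where $s^r[a,b]=0$ for $r\geq 2$ --- which is all that is needed for the use made of it. The paper's own proof drops these tail terms silently (its formula for $[ab]c^n$ is itself only a congruence mod $2FF$ once $n\geq 2$), and its lead-in claim that the lemma is ``valid in $\hat{\cA}$, not just in a quotient'' is an overstatement that your tail analysis quietly corrects; nothing downstream is affected, since the lemma is only invoked in the $2FF$ computation, and your remark that mod $1FF$ the $r=1$ term also dies, recovering Lemma \ref{eliminatec}, is likewise correct.
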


\begin{proof}
Using the fact that $a+b+c$ is central, we have
\begin{equation*}
(a+b+c)[ab]=[ab](a+b+c)
\end{equation*}
hence
\begin{equation*}
[ab]c=[ab](-a-b) +(a+b+c)[ab]
\end{equation*}
hence
\begin{equation*}
[ab]c^2=[ab](-a-b)^2 + 2(a+b+c)[ab](-a-b)
\end{equation*}
and more generally
\begin{equation*}
[ab]c^n=[ab](-a-b)^n + n(a+b+c)[ab](-a-b)^{n-1}
\end{equation*}
and indeed
\begin{equation}
[a,b]\ \alpha(a,b,c)=[ab]\ \alpha(a,b,-a-b) + (a+b+c)[ab]\
\partial_c \alpha(a,b,c) \label{[ab]alpha}
\end{equation}
where the partial is evaluated at $c=-a-b$.
\end{proof}

Note that in practice we will actually use this lemma to replace $a$
by $-b-c$, as we will later find it more convenient to work with $b$
and $c$.

Hence we will actually take $\Phi$ to have the form:
\begin{equation*}
\Phi(a,b) = 1 + [ab]\lambda(-b-c,b) - b[ab](\ly - \lx) - c[ab]\lx
\end{equation*}
where the partials are evaluated at $(x,y) = (-b-c,b)$.  In
practice, though, we will write this as

\begin{equation*}
\Phi(a,b) = 1 + [ab]\lambda(a,b) - b[ab](\lb - \la) - c[ab]\la
\end{equation*}
remembering that in fact $a=-b-c$ in expressions premultiplied by a
commutator.

We now need to find expressions for $(\Phi^{-1})^{132}$ and
$\Phi^{312}$.  We already have an expression for $\Phi^{-1}$, namely
equation (\ref{Phiinverse}). Replacing $w$ by $\lambda$ and its
partials, this becomes:

\begin{equation*}
\Phi(a,b)^{-1} = 1 - [ab]\lambda(a,b) + a[ab]\la + b[ab]\lb
\end{equation*}

Also, as indicated earlier, under the permutation $(123) \rightarrow
(132)$, chords get permuted as follows:

\begin{align}
a & \rightarrow c \notag \\
b & \rightarrow b \notag \\
c & \rightarrow a \notag
\end{align}

Hence

\begin{align}
(\Phi^{-1})^{132} & = 1 - [cb]\lambda(c,b) + c[cb]\lc +
b[cb]\partial_b\lambda(b,c) \notag \\
& = 1+ [ab]\lambda(b,c) - b[ab]\partial_b\lambda(b,c) - c[ab]\lc
\notag
\end{align}
where we have used the symmetry of $\lambda$ and the relation
$-[cb]=[ab]$.

Under the permutation $(123) \rightarrow (312)$, the chords are
permuted as

\begin{align}
a & \rightarrow c \notag \\
b & \rightarrow a \notag \\
c & \rightarrow b \notag
\end{align}

Hence

\begin{align*}
\Phi^{312} & = 1 + [ab]\lambda(c,a) - c[ab]\partial_c\lambda(c,a) -
a[ab]\partial_a\lambda(a,c) \\
&= 1 + [ab]\lambda(c,a) + b[ab]\partial_a\lambda(a,c) -
c[ab](\partial_c\lambda(c,a) - \partial_a\lambda(a,c))
\end{align*}
where we have used the relation $[ca]=[ab]$.

We can now write the positive hexagon equation modulo 2FF:

\begin{align*}
e^{b+c} = & \bigl( 1+ [ab]\lambda(a,b) - b[ab](\lb - \la) +
c[ab]\la \bigr) \cdot e^b \\
& \cdot \bigl( 1+ [ab]\lambda(c,b) - b[ab]\partial_b\lambda(b,c) -
c[ab]\partial_c\lambda(c,b)
\bigr) \cdot \\
& e^c \cdot \bigl( 1 +[ab]\lambda(c,a) + b[ab]\partial_a\lambda(a,c)
- c[ab](\partial_c\lambda(c,a) - \partial_a\lambda(a,c)) \bigr)
\end{align*}

Linearization still holds in 2FF, so we need to keep only terms that
are up to linear in [ab], and we get

\begin{align}
\label{mess3} e^{b+c} - e^b e^c &= [ab]\lambda(a,b) e^{b+c} + e^b
[ab]
\lambda(b,c) e^c + e^b e^c [ab]\lambda(c,a) \\
& \quad -b[ab](\partial_b\lambda(b,a) - \partial_a\lambda(a,b))
e^{b+c} -e^b b[ab]\partial_b\lambda(b,c) e^c \notag \\
& + e^be^cb[ab]\partial_a\lambda(a,c) + c[ab]\partial_a\lambda(a,b)
e^{b+c} - e^b c[ab] \partial_c\lambda(c,b) e^c \notag \\
& - e^be^c c[ab](\partial_c\lambda(c,a)-\partial_a\lambda(a,c)) \notag \\
& = [ab]\bigl\{ \lambda(a,b) e^{b+c} + \lambda(b,c)e^c +
\lambda(c,a) \bigr\} \\
&+ b[ab] \bigl\{ \lambda(b,c)e^c + \lambda(c,a) -
(\partial_b\lambda(b,a) - \partial_a\lambda(a,b)) e^{b+c} \notag \\
& - \partial_b\lambda(b,c)e^c + \partial_a\lambda(a,c) \bigr\} \notag \\
& \quad + c[ab] \bigl\{ \lambda(c,a) + \partial_a\lambda(a,b)
e^{b+c} - \partial_c\lambda(c,b)e^c - \partial_c\lambda(c,a) +
\partial_a\lambda(a,c) \bigr\} \notag
\end{align}
where, again, we have throughout $a=-b-c$ whenever $a$ is
pre-multiplied by $[ab]$.

\subsubsection{Simplifying the Triangle Relation}

As with the 1FF case, we need to determine $e^{b+c}-e^b e^c$, this
time modulo 2FF.  The procedure is analogous to the 1FF case.  First
we note the following expression for the exponential function, which
is useful modulo 2FF:

\begin{equation*}
e^x = 1 + x + x^2\ \frac{e^x -x- 1}{x^2}
\end{equation*}

We will prove the following proposition concerning the difference
$e^{b+c} - e^b e^c$:

\begin{proposition}
Modulo 2FF, the triangle difference can be expressed:
\begin{align}
e^{b+c} - e^b e^c &= [a,b]\ \Bigl( \frac{e^{b+c}}{ab}
+ \frac{e^c}{bc} + \frac{1}{ac} \Bigr) \\
& +b[a,b] \Bigl( \frac{ e^{b+c}}{a^2 c} -
\frac{e^{b+c}}{b^2c} + \frac{e^c}{b^2c} + \frac{e^c}{bc} + \frac{1}{ac} - \frac{1}{a^2c} \Bigr) \notag \\
& +c[a,b] \Bigl( -\frac{e^{b+c}}{a^2 b} + \frac{e^c}{bc^2} +
\frac{1}{ac} - \frac{1}{bc^2} + \frac{1}{a^2b} \notag \\
& \quad \quad \quad - \frac{1}{ab} + \frac{1}{a^2b} - \frac{1}{bc} -
\frac{1}{bc^2} \Bigr) \notag
\end{align} \label{triangle2FF}
\end{proposition}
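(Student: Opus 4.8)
The plan is to run the same argument as in the proof of Proposition~\ref{postriangle}, but with the one-foot splitting $e^x = 1 + x\,(e^x-1)/x$ replaced by the two-foot splitting
\[
e^x \;=\; 1 + x + x^2\,\frac{e^x-x-1}{x^2}
\]
appropriate to 2FF. Writing $h(x):=(e^x-x-1)/x^2$, I would apply this to $e^{b+c}$, $e^b$ and $e^c$, multiply out $e^b e^c$, and subtract; the degree-$0$ and degree-$1$ parts agree on the two sides, leaving
\[
e^{b+c}-e^b e^c \;=\; (b+c)^2 h(b+c) \;-\; c^2 h(c) - bc - b\,c^2 h(c) - b^2 h(b) - b^2 h(b)\,c - b^2 h(b)\,c^2 h(c).
\]
Every monomial here already carries at least two frozen feet, which is exactly the form of the claimed right-hand side ($[a,b]$ contributing two feet, $b[a,b]$ and $c[a,b]$ three).

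The core of the argument is then to carry out the Proposition~\ref{postriangle} manipulation at two ``levels''. Each two-foot monomial is split as an outer foot times an inner one-foot expression (splitting sums $b+c$ into $b$ and $c$ where they occur as feet); one rewrites every commuting power series as a $1/x$-multiple of $e^{(\cdot)}-1$ or of $e^{(\cdot)}-x-1$ --- it is the $x^2$ in the denominator of $h(x)$ that is ultimately responsible for the squared denominators $a^2b$, $a^2c$, $b^2c$, $bc^2$ in the statement --- and then applies Lemma~\ref{xxminusyy}, in the premultiplied form $z\,(x\tfrac1x - y\tfrac1y) = z\,[x,y]\tfrac1{xy}$ (which one checks remains valid modulo 2FF), first to regroup the inner one-foot factors into commutators, and then again to regroup the surviving outer feet. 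By Lemma~\ref{eliminatec}(i) every commutator produced is normalized to $[a,b]$, and the outcome is a sum of terms $[a,b]\,r$, $b[a,b]\,r$, $c[a,b]\,r$ with $r$ a rational expression in $b$, $c$ and $b+c$.

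Finally one normalizes the $r$'s to the variables $a,b,c$ via $a=-(b+c)$. Modulo 2FF this substitution is no longer exact: by Lemma~\ref{corpartials} (equivalently, the 2FF refinement of Lemma~\ref{eliminatec}(ii) recorded in its proof) each replacement of a factor $b+c$ (or of the foot $a$) sitting against $[a,b]$ by $-a$ (resp.\ $-b-c$) produces a correction proportional to the central element $a+b+c$, that is, a term $(a+b+c)[a,b]\,(\cdots) = a[a,b](\cdots) + b[a,b](\cdots) + c[a,b](\cdots)$. Collecting all contributions by prefactor then yields the three coefficients in the statement; in particular the pure $[a,b]$ coefficient, after the further reduction to 1FF, must coincide with the expression of Proposition~\ref{postriangle}.

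I expect this last normalization-and-collection step to be where essentially all the difficulty lies: the bookkeeping is substantially heavier than in the 1FF case, and one must verify that the accumulated corrections cancel so that no $a[a,b]$ bucket survives and the $b[a,b]$ and $c[a,b]$ coefficients collapse to exactly the displayed rational functions (with the stated signs). The pure-$[a,b]$ comparison, by contrast, is essentially a re-derivation of Proposition~\ref{postriangle}, and the corresponding negative-hexagon statement will follow from the identical computation with $b,c$ replaced by $-b,-c$.
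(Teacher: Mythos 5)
Your starting point is exactly the paper's: the same two-foot splitting $e^x = 1 + x + x^2\,\frac{e^x-x-1}{x^2}$, the same expansion of the triangle (your displayed formula for $e^{b+c}-e^b e^c$ agrees with the paper's, which then rewrites it as a combination of terms like $(b+c)^2\frac{e^{b+c}}{(b+c)^2}-b^2\frac{e^{b+c}}{b^2}$, $b^2\frac{e^c}{b^2}-c^2\frac{e^c}{c^2}$, etc.), and the same overall strategy of regrouping differences of feet into terms premultiplied by $[a,b]$, $b[a,b]$, $c[a,b]$.

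The gap is in your second regrouping step. Your premultiplied identity $z\bigl(x\tfrac1x-y\tfrac1y\bigr)=z[x,y]\tfrac1{xy}$ is indeed valid modulo 2FF, but it cannot be applied ``again to regroup the surviving outer feet'': the outer feet have nothing premultiplying them, and the un-premultiplied Lemma \ref{xxminusyy} is \emph{false} modulo 2FF. This is precisely why the paper inserts a dedicated 2FF analogue of Lemma \ref{xxminusyy}, namely $x^2\frac{1}{x^2}-y^2\frac{1}{y^2}=[xy]\frac{1}{xy}+x[xy]\frac{1}{x^2y}+y[xy]\frac{1}{xy^2}$ together with the variants $x^2\frac1x - xy\frac1y = x[xy]\frac1{xy}$ and the $(x+y)$-identity; the correction terms $x[xy]\frac{1}{x^2y}+y[xy]\frac{1}{xy^2}$ are exactly what the naive outer regrouping misses, and they --- not only the $x^2$ in the denominator of $h$ --- are the true source of the squared denominators $\frac1{a^2b},\frac1{a^2c},\frac1{b^2c},\frac1{bc^2}$ in the statement. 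The repair needs no new idea: for instance $x^2\frac{1}{x^2}-y^2\frac{1}{y^2}=x\bigl(x\frac{1}{x^2}-y\frac{1}{xy}\bigr)+(xy-yx)\frac{1}{xy}+y\bigl(x\frac{1}{xy}-y\frac{1}{y^2}\bigr)$, and two applications of your premultiplied identity recover the paper's lemma; but as written, your plan locates ``essentially all the difficulty'' in the $a=-(b+c)$ normalization, where in fact Lemma \ref{corpartials} and centrality suffice routinely (in particular any $a[a,b]$ bucket is absorbed via $a[a,b]=[a,b](a+b+c)-b[a,b]-c[a,b]$, so no cancellation miracle is needed), and defers to ``bookkeeping'' the one 2FF-specific ingredient that the paper makes explicit.
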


The proof relies on a lemma which is the equally trivial, but
equally useful 2FF analogue of Lemma \ref{xxminusyy}:

\begin{lemma}
For x and y any two different elements of \{a, b, c\}, we have the
following modulo 2FF:
\begin{equation*} x^2 \frac{1}{x^2} - y^2 \frac{1}{y^2} = [xy]
\frac{1}{xy} + x[xy] \frac{1}{x^2y} + y[xy] \frac{1}{xy^2}
\end{equation*}
There are a couple of variants of this equation which will also be
useful:
\begin{align*}
x^2 \frac{1}{x} - xy \frac{1}{y} & = x[xy] \frac{1}{xy} \\
-(x+y)^2 \frac{1}{x+y} + xy \frac{1}{y} + y^2 \frac{1}{y} &= x[yx]
\frac{1}{(x+y)y} + y[yx] \frac{1}{(x+y)y}
\end{align*}
\end{lemma}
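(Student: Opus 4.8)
The plan is to run exactly the elementary bookkeeping that proves Lemma \ref{xxminusyy}, but now tracking the first \emph{two} chords of each monomial rather than just the first, since modulo 2FF it is precisely the first two chords that are frozen. Recall the identification set up just before Lemma \ref{xxminusyy}: in positive degree the algebra (mod 2FF) is a module over the commutative coefficient ring $\Q[[a^{\pm 1},b^{\pm 1},c^{\pm 1}]]$ acting on the right, whose generators (``feet'') are the length-two words in $\{a,b,c\}$ — for a word $w_1 w_2 w_3\cdots$ the foot is $w_1 w_2$, and everything after position two commutes and so is absorbed into the ring coefficient. After this identification, each side of each of the three asserted identities is a finite $\Q$-linear combination of terms (foot)$\cdot$(ring element).

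Concretely I would expand every commutator on a right-hand side via $[xy]=xy-yx$, obtaining sums of honest words of length two or three already multiplied on the right by a Laurent monomial, and then absorb the trailing third letter of each length-three word into its coefficient using commutativity of the coefficient ring, e.g.
\[
(x^2 y)\cdot\tfrac{1}{x^2 y}=x^2\cdot\tfrac{1}{x^2},\qquad (x y x)\cdot\tfrac{1}{x^2 y}=(xy)\cdot\tfrac{1}{xy}.
\]
Doing this uniformly, the mixed feet $xy$ and $yx$ occurring in the main identity cancel in pairs, leaving $x^2\cdot\frac{1}{x^2}-y^2\cdot\frac{1}{y^2}$, which is the left-hand side; the two variants collapse by even shorter telescopings (indeed no telescoping at all in the first variant). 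So the entire proof is: distribute, absorb trailing letters, cancel.

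The single point requiring care is exactly the one flagged in the paragraph preceding the statement: a denominator may only cancel letters that \emph{follow} the foot, never a chord belonging to the foot, because the feet are module generators, not invertible ring elements. Thus $x^2\cdot\frac{1}{x^2}$ is the module element ``foot $x^2$ with coefficient $x^{-2}$'', \emph{not} $1$, which is why the telescoping above does not over-simplify to $0$. For the third variant there is also a harmless remark to make about inverting $x+y$ in the coefficient ring: this is legitimate formally, and in every place this identity is actually applied it occurs premultiplied by a commutator $[ab]$, so that by Lemma \ref{eliminatec} one may replace $x+y$ by $-z$ (the remaining one of $a,b,c$) and use $z^{-1}$.

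I do not expect a genuine obstacle — the statement is ``trivial'' in the paper's sense, and the only discipline needed is to respect the foot/coefficient boundary at every step, after which all three identities are the telescoping sums indicated. This lemma will then play for the 2FF triangle (Proposition \ref{triangle2FF}) precisely the role that Lemma \ref{xxminusyy} played for the 1FF triangle (Proposition \ref{postriangle}).
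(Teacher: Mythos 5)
Your proof is correct and is essentially the paper's own approach: the paper omits the argument as ``straightforward if tedious,'' and your expand--absorb--cancel computation over the length-two feet (expand $[xy]=xy-yx$, push the third letter into the right-acting coefficient ring, compare coefficients of the feet $x^2$, $y^2$, $xy$, $yx$) is exactly that calculation, and all three identities do telescope as you claim. One small caution: the aside justifying $\frac{1}{x+y}$ via Lemma \ref{eliminatec} is misplaced, since that lemma is a 1FF statement and the substitution $x+y=-z$ after a commutator is not valid modulo 2FF (that is precisely what Lemma \ref{corpartials} corrects); but the purely formal inversion of $x+y$ in the coefficient ring needs no such justification, so the proof itself is unaffected.
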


\begin{proof}
The proofs are straightforward if tedious and are omitted.
\end{proof}

\begin{proof}[Proof of Proposition~\ref{triangle2FF}]
We plug our expression for the exponential function mod 2FF into the
triangle $e^{b+c} - e^b e^c$ and get:

\begin{align*}
e^{b+c} - e^b e^c &= \Bigl( 1+ (b+c) + (b+c)^2\ \frac{e^{b+c} -
(b+c) -1}{(b+c)^2} \Bigr) \\
& \quad - \Bigl(1 + b + b^2\ \frac{e^b-b-1}{b^2} \Bigr) - \Bigl(
1+c+c^2\ \frac{e^c-c-1}{c^2} \Bigr) \\
&= (b+c)^2\ \frac{e^{b+c} - (b+c) -1}{(b+c)^2} - b^2\
\frac{e^b-b-1}{b^2}\ e^c \\
& \quad -b-bc\ \frac{e^c-1}{c} -1-c- c^2\ \frac{e^c-c-1}{c^2} \\
&= (b+c)^2\ \frac{e^{b+c}}{(b+c)^2} - b^2\ \frac{e^{b+c}}{b^2} \\
& \quad + b^2\ \frac{e^c}{b^2} - c^2\ \frac{e^c}{c^2} \\
& \quad + b^2\ \frac{be^c}{b^2} - bc\ \frac{e^c}{c} \\
& \quad -(b+c)^2 \frac{1}{b+c} + bc \frac{1}{c} + c^2\frac{1}{c} \\
& \quad - (b+c)^2 \frac{1}{(b+c)^2} + c^2\frac{1}{c^2}
\end{align*}

From here we use various substitutions from the previous lemma.  The
result then follows from a few additional simple manipulations,
which are omitted.
\end{proof}

\subsubsection{Solution Modulo 2FF}

We can now bring together the different components of the hexagon,
namely the expression for $e^{b+c} - e^be^c$ and the expression
involving the $\lambda$'s (see equation (\ref{mess3}).  We get terms
that are pre-multiplied by $[ab]$, $b[ab]$ and $c[ab]$:

\paragraph{Terms in $[ab]$}

\begin{equation}
\frac{e^{b+c}}{ab} + \frac{e^c}{bc} + \frac{1}{ac} =
\lambda(a,b)e^{b+c} + \lambda(c,b)e^c + \lambda(c,a) \notag
\end{equation}

Note that this is just the (positive) hexagon equation in 1FF. In
the following we will refer to the LHS as $Hex^l$ and to the RHS as
$Hex^r$.

\paragraph{Terms in $b[ab]$}

We have:

\begin{align*}
\frac{ e^{b+c}}{a^2 c} - \frac{e^{b+c}}{b^2c} + \frac{e^c}{b^2c} +
\frac{e^c}{bc} + \frac{1}{ac} - \frac{1}{a^2c} &=
\partial_a\lambda(a,b) e^{b+c} - \partial_b\lambda(a,b) e^{b+c} \\
&+ \lambda(c,b) e^c -
\partial_b\lambda(b,c) e^c + \lambda(c,a) +
\partial_a\lambda(a,c)
\end{align*}

I now claim that this $b[ab]$ equation is simply the statement

\begin{equation}
\bigl[ 1+\partial_a-\partial_b \bigr] Hex^l = \bigl[ 1+\partial_a-
\partial_b \bigr] Hex^r \notag
\end{equation}
and hence follows from the hexagon. On the RHS, this is easy to
check:

\begin{align}
\bigl[ 1+\partial_a- \partial_b \bigr] Hex^r = & \lambda(a,b)e^{b+c}
+ \lambda(c,b)e^c + \lambda(c,a) \\
& + (\partial_a\lambda(a,b))e^{b+c} + \partial_a\lambda(a,c) \notag
\\
& - (\partial_b\lambda(a,b))e^{b+c} - \lambda(a,b)e^{b+c}
-(\partial_b\lambda(c,b))e^c \notag \\
& = \partial_a\lambda(a,b) e^{b+c} - (\partial_b\lambda(a,b))
e^{b+c} + \lambda(c,b) e^c - (\partial_b\lambda(b,c)) e^c \notag \\
& \quad \quad \quad \quad + \lambda(c,a) +
\partial_a\lambda(a,c) \notag
\end{align}
as required.

Checking the LHS is even simpler. We have

\begin{equation}
Hex^l = \frac{e^{b+c}}{ab} + \frac{e^c}{bc} + \frac{1}{ac} \notag
\end{equation}

Hence

\begin{align}
\bigl[ 1+\partial_a-\partial_b \bigr] Hex^l = & \frac{e^{b+c}}{ab} +
\frac{e^c}{bc} + \frac{1}{ac} \notag \\
& + ( -\frac{e^{b+c}}{a^2 b} - \frac{1}{a^2 c} ) \notag \\
& - (-\frac{e^{b+c}}{ab^2} + \frac{e^{b+c}}{ab} - \frac{e^c}{b^2 c})
\notag
\end{align}

After a small manipulation involving the $e^{b+c}$ terms and some
simplification we get the desired result.

\paragraph{Terms in $c[ab]$}

Here we have:

\begin{multline}
-\frac{e^{b+c}}{a^2 b} + \frac{e^c}{bc^2} + \frac{1}{ac} -
\frac{1}{bc^2} + \frac{1}{a^2b} - \frac{1}{ab} + \frac{1}{a^2b} -
\frac{1}{bc} - \frac{1}{bc^2} \notag \\
= \partial_a\lambda(a,b) e^{b+c} - \partial_c\lambda(c,b) e^c +
\partial_a\lambda(c,a) + \lambda(c,a) - \partial_c\lambda(a,c) \notag
\end{multline}

As with the $b[ab]$ terms, I claim that the $c[ab]$ equation is
simply the statement
\begin{equation}
\bigl[ 1+\partial_a-\partial_c \bigr] Hex^l = \bigl[ 1+\partial_a-
\partial_c \bigr] Hex^r \notag
\end{equation}
so that it, too, follows from the hexagon.  The calculation is
similar to the $b[ab]$ case and will not be repeated.

\paragraph{The 2FF Solution}

We have shown that the solution (\ref{Phi2FF}) takes the form

\begin{equation}
\Phi (a,b) = 1 + [ab] \lambda(a,b) - a[ab] \partial _a \lambda(a,b)
- b[ab] \partial _b \lambda(a,b) \label{NewPhi}
\end{equation}
where $\lambda$ is a solution of the 1FF equation.

Although this is technically only a solution to the positive
hexagon, it is not hard to show that in fact the unitarity condition
(\ref{unitary}) suffices to insure that the same solution also
satisfies the negative hexagon (of course this also follows on
general principles from arguments given in \cite{DBN2}, alluded to
earlier). Moreover, since in 2FF all commutators of commutators are
zero, Kurlin's argument (see \cite{Kurlin}, Proposition 5.10) still
applies to show that this unitarity condition also insures that the
solution satisfies the pentagon equation.

Thus each solution $\lambda$ to the 1FF equation gives rise to a
solution to the 2FF equation according to formula (\ref{NewPhi}).

\begin{remark}
Singular Solution
\end{remark}
As with the 1FF equation, one can verify that there is also a
singular solution $\lambda(x,y) = \frac{1}{xy}$.

\section{Concluding Remarks}

There are a number of directions in which the work of this paper
could be extended or applied.  An obvious possible extension is to
attempt to extend the results of the 1FF and 2FF quotients to `nFF'
quotients, with $n \in \N$.

More broadly, one could consider which other quotients might afford a closed-form solution.  For instance, there is a standard way to associate to any finite-type invariant of knots, such as the coefficients of the
Alexander polynomial, a `weight system', ie a linear functional from
the space of chord diagrams on the circle modulo the 4T relation to
$\Q$ (see \cite{DBN1}). The kernel of the Alexander polynomial, or of any other finite type invariant,
consists of those chord diagrams on which the weight system
corresponding to the Alexander polynomial or other invariant vanishes. One could therefore consider whether the
corresponding quotient also allows a closed--form formula for the
associator.

In a different direction, one can consider quotients which are
well-behaved under a suitable class of operations on knots, or
knot-like objects.  This is related to the `algebraic knot theory'
program initiated by Bar-Natan \cite{DBN6}, which considers
invariants of knot-like objects which transform `functorially' with
respect to certain operations between such objects. The operations
considered include the well-known strand-doubling and connected sum
operations, but also strand deletion and `unzip' operations.  A
broad class of knot properties can be characterized using these
operations, including knot genus, unknotting number and the property
of being ribbon. This being the case, invariants which are
well-behaved under these operations may shed further light on these
properties by allowing one to transform questions about knots into
questions which are set in the target space of the invariant, which
is presumably simpler and algebraically more tractable.

As noted earlier, research by Ng \cite{Ng} has shown in particular
that information about ribbon knots cannot be obtained only from a
finite approximation to a knot invariant, so that closed--form
solutions will be needed.  Here again, though, one is faced with the
fact that determination of closed-form formulas is extremely
difficult, and so consideration of suitable quotient space is
desirable.  This leads one to consider quotients which are
well-behaved under the allowed operations.  To give a hint of the
type of quotients this implies, we note that one can pictorially
represent the bracket of a Lie algebra as a trivalent graph:

\[
\xy (-5,5)*{}; (0,0)*{} **\dir{-};  (5,5)*{}; (0,0)*{}
**\dir{-}; (0,-0)*{}; (0,-5)*{} **\dir{-}
\endxy
\]
where the three edges represent two inputs and one output.  Then,
for instance, the quotient explored by Kurlin, namely $\cL /\
[[\cL,\cL],[\cL,\cL]]$, is the quotient in which the following
diagram, and diagrams containing this diagram, are set to zero:

\[
\xy
(-13,5)*{}; (-8,0)*{} **\dir{-};
(-3,5)*{}; (-8,0)*{} **\dir{-};
(3,5)*{}; (8,0)*{} **\dir{-};
(13,5)*{}; (8,0)*{} **\dir{-};
(-8,0)*{}; (0,-8)*{} **\dir{-};
(8,0)*{}; (0,-8)*{} **\dir{-};
(0,-8)*{}; (0,-13)*{} **\dir{-}
\endxy
\]

It turns out that `internal' properties of a diagram are generally
well-behaved under the relevant operations.  For instance, one could
consider the quotient in which the following diagram, and all
diagrams containing this diagram, are set to zero:

\[
\xy
(-13,5)*{}; (-8,0)*{} **\dir{-};
(-3,5)*{}; (-8,0)*{} **\dir{-};
(3,5)*{}; (8,0)*{} **\dir{-};
(13,5)*{}; (8,0)*{} **\dir{-};
(-8,0)*{}; (0,-8)*{} **\dir{-};
(8,0)*{}; (0,-8)*{} **\dir{-};
(0,-8)*{}; (0,-15)*{} **\dir{-};
(0,-15)*{}; (-5,-20)*{} **\dir{-};
(0,-15)*{}; (5,-20)*{} **\dir{-}
\endxy
\]

The key property of such diagrams for present purposes is that they
contain an internal vertex, ie a vertex whose edges lead only to
other (trivalent) vertices, and not to endpoints of the diagram. The
property of having such internal vertices is well-behaved under the
algebraic knot theory operations, and hence leads to a quotient
whose exploration would be worthwhile from the algebraic knot theory
viewpoint.


\begin{thebibliography}{9}

\bibitem[BN1]{DBN1}
Bar-Natan, D.: `On the Vassiliev Knot Invariants', \emph{Topology}
(Vol. 34:2), pp. 423-72 (1995).
%
\bibitem[BN2]{DBN2}
Bar-Natan, D.: `Non-Associative Tangles', \emph{AMS/IP Studies in
Advanced Mathematics} (Vol.2:I), pp. 139-83 (1997).
%
\bibitem[BN3]{DBN3}
Bar-Natan, D.: `Knot invariants, associators and a strange breed of
planar algebras,' talk at the Fields Institute, January 11th, 2001,
http://katlas.math.toronto.edu/~drorbn/Talks/Fields-010111/.
%
\bibitem[BN4]{DBN4}
Bar-Natan, D.: `Algebraic Structures on Knotted Objects and
Universal Finite Type Invariants,' web publication:
http://katlas.math.toronto.edu/~drorbn/papers/AlgebraicStructures
%
\bibitem[BN5]{DBN6}
Bar-Natan, D.: `Research Proposal on Knot Theory and Algebra,' web
publication:
http://katlas.math.toronto.edu/~drorbn/Profile/ResearchProposal-07.pdf
%
\bibitem[Drin]{Drinfel'd} Drinfel'd, V.G.: `On Quasi-Hopf Algebras',
\emph{Leningrad Math. J.} (Vol. 1), pp. 1419-57 (1990).
%
\bibitem[Drin2]{Drinfel'd2}
Drinfel'd, V.G.: `On Quasi-Triangular Quasi-Hopf Algebras and a
Group Closely Connected With Gal($\bar{\Q}/\Q$)', \emph{Leningrad
Math. J.} (Vol. 2), pp. 829-60 (1991).
%
\bibitem[Kur]{Kurlin}
Kurlin, V.: `Explicit Description of Compressed Logarithms of all
Drinfel'd Associators', \emph{Journal of Algebra} (Vol. 292, no. 1),
pp. 184-242 (2005); arXiv:math.GT/0408398.
%
\bibitem[Lieb]{Lieberum}
Lieberum, J.: `The Drinfel'd Associator of gl(1$|$1)',
arXiv:math.QA/0204346.
%
\bibitem[Ng]{Ng}
Ng, K.Y.: `Groups of Ribbon Knots,' \emph{Topology} 37 (1998)
441-58, arXiv:q-alg/9502017 (addendum at arXiv:math.GT/0310074).
%
\bibitem[Thu]{Thurston}
Thurston, Dylan P.: `The algebra of knotted trivalent graphs and
Turaev's shadow world,' Geom. Topol. Monogr. 4 (2002) 337-362;
math.GT/0311458.
%
\bibitem[Zin]{Zin}
Zinbiel, G.W.: `Encyclopedia of Types of Algebras 2010', arXiv:1101.0267 (math.RA).

\end{thebibliography}
\end{document}